\def\E{\mathbb{E}}
\def\E{\mathbb{E}}
\def\eps{\epsilon}
\def\gam{\gamma}
\def\1{\mathbf{1}}
\def\lam {\lambda}
\def\Lam{\Lambda}
\def\tce{t_c + \eps}
\def\tce2{t_c + \frac{\eps}{2}}
\newcommand\numberthis{\addtocounter{equation}{1}\tag{\theequation}}
\newcommand*{\MK}[1]{M_{K_{#1,#1}}}
\newtheorem*{theorem*}{Theorem}
\newtheorem{theorem}{Theorem}
\newtheorem{lemma}{Lemma}
\newtheorem*{prop*}{Proposition}
\newtheorem{conj}{Conjecture}
\newtheorem{claim}{Claim}
\begin{document}
\title{Independent Sets, Matchings, and Occupancy Fractions}
\author{Ewan Davies}
\author{Matthew Jenssen}
\author{Will Perkins}
\thanks{WP supported in part by EPSRC grant EP/P009913/1.}
\author{Barnaby Roberts}
\address{Department of Mathematics, London School of Economics, London, UK.}
\email{\{e.s.davies,m.o.jenssen,b.j.roberts\}@lse.ac.uk }
\address{School of Mathematics, University of Birmingham, Birmingham, UK.}
\email{w.f.perkins@bham.ac.uk}
\date{\today}
\subjclass[2010]{Primary: 05C69, 05C70; Secondary: 05C30, 05C31, 82B20}

\begin{abstract}
We prove tight upper bounds on the logarithmic derivative of the independence and matching polynomials of $d$-regular graphs. For independent sets, this theorem is a strengthening of the results of Kahn, Galvin and Tetali, and Zhao showing that a union of copies of $K_{d,d}$ maximizes the number of independent sets and the independence polynomial of a $d$-regular graph.  

For matchings, this shows that the matching polynomial and the total number of matchings of a $d$-regular graph are maximized by a union of copies of $K_{d,d}$. Using this we prove the asymptotic upper matching conjecture of Friedland, Krop,  Lundow, and Markstr{\"o}m.

In probabilistic language, our main theorems state that for all $d$-regular graphs and all $\lambda$, the occupancy fraction of the hard-core model and the edge occupancy fraction of the monomer-dimer model with fugacity $\lambda$ are maximized by $K_{d,d}$.  Our method involves constrained optimization problems over distributions of random variables and applies to all $d$-regular graphs directly, without a reduction to the bipartite case. 
\end{abstract}

\maketitle
\thispagestyle{empty}

\section{Independent Sets}

Let $G$ be a graph.  The independence polynomial of $G$ is
\[P_G(\lam) = \sum_{I \in \mathcal I} \lam^{|I|} \]
where $\mathcal I $ is the set of all independent sets of $G$.  By convention we consider the empty independent set to be a member of $\mathcal I$.  The \textit{hard-core model} with fugacity $\lam$ on $G$ is a random independent set $I$ drawn according to the distribution
\[ 
\Pr_\lam[ I] = \frac{ \lam^{|I|}} { P_G(\lam)}\,.
\]
 $P_G(\lam)$ is also  called the partition function of the hard-core model on $G$.
 
  In the hard-core model, the quantity $\alpha_G(\lam) = \frac{1}{|V(G)|} \frac{\lam P^\prime_G(\lam) }{P_G(\lam)}$ is the \textit{occupancy fraction}: the expected fraction of vertices of $G$ belonging to the random independent set $I$.  In particular,
\begin{align*}
\label{eq:occdef}
\alpha_G(\lam) &= \frac{1}{|V(G)|} \sum_{v \in G} \Pr[v \in I]  = \frac{1}{|V(G)|} \frac{\sum _{I \in \mathcal I} |I| \lam ^{|I|}   }{P_G(\lam)  } \\
&= \frac{1}{|V(G)|} \frac{\lam P_G^\prime(\lam)}{P_G(\lam) }  = \frac{\lam}{|V(G)|} \left( \log P_G(\lam)  \right )^\prime  \,.
\end{align*}

We write $K_{d,d}$ for the complete bipartite graph with $d$ vertices in each part.  If $2d$ divides $n$, let $H_{d,n}$ denote the $d$-regular, $n$-vertex graph that is the disjoint union of $n/(2d)$ copies of $K_{d,d}$. Kahn \cite{kahn2001entropy} showed that $H_{d,n}$ maximizes the total number of independent sets over all $d$-regular, $n$-vertex bipartite graphs, and then showed~\cite{kahn2002entropy} that in fact $K_{d,d}$ (or $H_{d,n}$) maximizes $\frac{1}{|V(G)|} \log P_G(\lam)$ for $\lam \ge 1$ over all $d$-regular bipartite graphs. The log partition function result generalizes the counting result as the latter can be recovered by setting $\lam=1$. Galvin and Tetali~\cite{galvin2004weighted} then gave a broad generalization of Kahn's result to counting homomorphisms from a $d$-regular, bipartite $G$ to any graph $H$.  The case of $H$ formed of two connected vertices, one with a self-loop, is that of counting independent sets. Via a modification of $H$ and a limiting argument, they proved that in fact $\frac{1}{|V(G)|} \log P_G(\lam)$ is maximized for any $\lam>0$ over $d$-regular bipartite graphs by $K_{d,d}$.  Zhao \cite{zhao2010number} then removed the bipartite restriction in these results for  independent sets by reducing the general case to the bipartite case, in particular proving that $H_{d,n}$ has the greatest number of independent sets of any $d$-regular graph on $n$ vertices.

Here we prove a strengthening of the above results for independent sets. 
\begin{theorem}
\label{thm:occupy}
For all $d$-regular graphs $G$ and all $\lam>0$, we have
\[ \alpha_G(\lam) \le \alpha_{K_{d,d}}(\lam) = \frac{\lam(1+\lam)^{d-1}}{2(1+\lam)^d -1} \,. \]
  The maximum is achieved only by unions of copies of $K_{d,d}$. 
  \end{theorem}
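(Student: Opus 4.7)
The plan is the \emph{occupancy method}: upper-bound $\alpha_G(\lam)$ via a linear program over ``local view'' distributions, and show the optimum is realised by $K_{d,d}$.

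Concretely, let $I$ be a hard-core sample on $G$ and $v$ a uniformly random vertex of $G$. Consider the joint distribution $\pi$ of the pair $\bigl(\1[v\in I],\,I\cap N(v)\bigr)$ after averaging over $v$, supported on local configurations (a spin for $v$ together with an independent subset of $N(v)$, where $v\in I$ forces $I\cap N(v)=\emptyset$). Three linear identities on $\pi$ are immediate: (a) $\alpha_G(\lam)=\Pr_\pi[v\in I]$ by definition; (b) the spatial Markov property of the hard-core measure gives $\Pr_\pi[v\in I]=\frac{\lam}{1+\lam}\Pr_\pi[I\cap N(v)=\emptyset]$; (c) double-counting edges with an endpoint in $I$ yields $d\cdot\alpha_G(\lam)=\E_\pi[|I\cap N(v)|]$. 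These three alone are too weak: a quick LP calculation shows they force only $\alpha\le\lam/(2\lam+1)$, which matches $\alpha_{K_{d,d}}(\lam)$ only when $d=1$. To tighten, I would add a refined family of identities, one per independent subset $S\subseteq N(v)$: in the hard-core measure, $\Pr[I\cap N(v)=S,\,v\notin I]\propto\lam^{|S|}$, the proportionality factor depending only on the structure of $G$ outside $N[v]$. Averaging these over $v$ yields additional linear constraints on $\pi$ that preserve the ``$K_{d,d}$ is feasible'' property while sharpening the LP.

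With the constraints in place, I set up a linear program that maximises $\Pr_\pi[v\in I]$ over probability distributions $\pi$ satisfying them. The local view distribution arising from the hard-core model on $K_{d,d}$ is feasible and attains value $\alpha_{K_{d,d}}(\lam)=\lam(1+\lam)^{d-1}/(2(1+\lam)^d-1)$; to conclude the bound I would produce a matching dual feasible solution (Lagrange multipliers) of the same value. The explicit algebraic form of $\alpha_{K_{d,d}}(\lam)$, with its $(1+\lam)^d$ scaling and $2(1+\lam)^d-1$ denominator, strongly suggests that the dual multipliers are read off directly from the $K_{d,d}$ measure itself, so that weak duality collapses to a sum of non-negative slackness terms.

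The main obstacle will be striking the right balance between constraints and dual: too few constraints and the LP's optimum strictly exceeds $\alpha_{K_{d,d}}(\lam)$; too many and the dual becomes intractable to guess. For the equality case, I would trace through the LP's tightness conditions to deduce that every local view in $G$ must agree, in the averaged sense, with the $K_{d,d}$ local view, so that each vertex $v$ sees the hard-core measure on $N[v]$ as if it were in a copy of $K_{d,d}$. Combined with $d$-regularity and a short component-level connectivity argument, this should force every component of $G$ to be a copy of $K_{d,d}$, giving the uniqueness claim.
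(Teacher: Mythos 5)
There is a genuine gap in the central step. The ``refined family of identities'' you propose to add is based on a false claim: it is not true that $\Pr[I\cap N(v)=S,\,v\notin I]$ equals $\lam^{|S|}$ times a factor depending only on $G$ outside $N[v]$. Writing it out, $\Pr[I\cap N(v)=S,\,v\notin I]=\lam^{|S|}\,P_{G\setminus(N[v]\cup N(S))}(\lam)/P_G(\lam)$, so the factor depends on $S$ through which external vertices $S$ blocks; two sets of the same size in $N(v)$ generally have different probabilities. The underlying structural problem is that your local view $\bigl(\mathbf 1[v\in I],\,I\cap N(v)\bigr)$ is too coarse to support any valid constraints beyond (a)--(c), and as you yourself computed, those only give $\alpha\le\lam/(2\lam+1)$. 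The paper's key move is to change the local random variable: condition not on $I\cap N(v)$ but on the \emph{free neighborhood} $C$, the subgraph induced by those neighbors of $v$ not adjacent to any vertex of $I\setminus N(v)$. Given $C$, the conditional law of $I$ on $N[v]$ really is hard-core on $C$ (with $v$ attached), so $\Pr[v\in I]=\frac{\lam}{1+\lam}\E\bigl[1/P_C(\lam)\bigr]$ and $\E|I\cap N(v)|=\E\bigl[\lam P_C'(\lam)/P_C(\lam)\bigr]$ are expectations of explicit functions of the \emph{same} variable $C$; equating them (your (b) and (c), now expressed through $C$) is the single extra constraint that makes the LP tight at $K_{d,d}$. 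Note also that $C$ is a graph, not a set: edges inside $N(v)$ (triangles of $G$) matter, and your setup does not engage with them.

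Even with the correct LP, the dual step is not the formality your last paragraph suggests. The dual has only two multipliers, and feasibility reduces to a genuine combinatorial inequality: for every graph $C$ on at most $d$ vertices other than $\emptyset$ and $\overline{K}_d$, the expected size of a nonempty hard-core independent set satisfies $\lam P_C'(\lam)/(P_C(\lam)-1)<\lam d(1+\lam)^{d-1}/((1+\lam)^d-1)$, i.e.\ $\overline{K}_d$ is extremal for this quantity. The paper proves this by comparing ratios of successive coefficients of $P_C$ and $P_{\overline{K}_d}$ (using $(i+1)r_{i+1}\le(d-i)r_i$). Without identifying and proving this inequality, ``weak duality collapses to non-negative slackness terms'' is an assertion, not an argument; strictness of the inequality is also exactly what drives the complementary-slackness uniqueness argument you sketch at the end.
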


In particular Theorem~\ref{thm:occupy} states that the derivative of $\frac{1}{|V(G)|}\log P_G(\lam)$ is maximized over $d$-regular graphs for all $\lam$ by $K_{d,d}$, which when integrated, immediately implies that the normalized log partition function, $\frac{1}{|V(G)|} \log P_G(\lam)$, is maximized. Even more, it says that the difference $\frac{1}{2d} \log P_{K_{d,d}}(\lam)- \frac{1}{|V(G)|} \log P_G(\lam)$ is strictly increasing in $\lam$ for any $d$-regular graph $G$ that is not $H_{d,n}$.  Note that $\frac{1}{n} \log P_{H_{d,n}}(\lam) = \frac{1}{2d} \log P_{K_{d,d}}(\lam)$ for any $n$ divisible by $2d$.

In Section \ref{sec:givensize} we observe that the above bound on the partition function gives new upper bounds on a related problem: maximizing the number of independent sets of a given size in $d$-regular graphs. 

Next, let $\alpha_{T_d}(\lam)$ be occupancy fraction of the unique translation invariant hard-core measure on the infinite $d$-regular tree $T_d$ at fugacity $\lam$; that is, $\alpha_{T_d}(\lam)$ is the solution of the equation
\[ \frac{\alpha}{\lam (1-\alpha)} = \left(\frac{1-2\alpha}{1-\alpha} \right)^d  \]
(see e.g. \cite{bhatnagar2014decay}).

Using a variant of the method used to establish Theorem \ref{thm:occupy}, we  prove a lower bound on the occupancy fraction in any $d$-regular, vertex-transitive, bipartite graph $G$.

\begin{theorem}
\label{thm:lowerbound}
For any $d$-regular, vertex-transitive, bipartite graph $G$,
\[ \alpha_G(\lam) > \alpha_{T_d}(\lam)\,.  \]
\end{theorem}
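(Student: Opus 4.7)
The plan is to apply the occupancy-method philosophy of Theorem~\ref{thm:occupy}: study the hard-core model's local view at a single vertex, and compare it to the tree recursion via the FKG property of the hard-core measure on bipartite graphs. Using vertex-transitivity to pick any $v \in V(G)$ so that $\Pr[v \in I] = \alpha_G(\lam)$, I would first derive the local identity
\[ \alpha_G(\lam) = \frac{\lam}{1+\lam}\,\Pr[N(v)\cap I = \emptyset], \]
and then, by decomposing $\Pr[N(v)\cap I = \emptyset]$ on whether $v \in I$, obtain $\alpha_G = \lam(1-\alpha_G)\,q_G$ with $q_G := \Pr[N(v)\cap I = \emptyset \mid v \notin I]$. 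An analogous decomposition of $\Pr[u \notin I] = 1-\alpha_G$ for $u \in N(v)$ gives $\Pr[u \notin I \mid v \notin I] = p_G := (1-2\alpha_G)/(1-\alpha_G)$.

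The key step is to show $q_G \geq p_G^d$, which amounts to positive correlation of the events $\{u \notin I\}_{u \in N(v)}$ conditional on $\{v \notin I\}$. This I would establish via FKG: since $G$ is bipartite with parts $A$ and $B$, the spin-flip $\tau_u = \sigma_u$ for $u \in A$ and $\tau_u = 1 - \sigma_u$ for $u \in B$ transforms the hard-core measure into a log-modular measure on $\{0,1\}^V$ conditioned on the sublattice $L = \{\tau : \tau_a \leq \tau_b \text{ for every edge } (a,b) \text{ with } a \in A,\, b \in B\}$. Such a measure satisfies the FKG lattice condition; conditioning on the single-coordinate event $\{v \notin I\}$ preserves it; and the events $\{u \notin I\}$ for $u \in N(v)$, which all live on the side opposite $v$, are simultaneously monotone in $\tau$. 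The FKG inequality then gives
\[ q_G = \Pr\!\Big[\bigcap_{u \in N(v)}\{u \notin I\}\,\Big|\,v\notin I\Big] \geq \prod_{u \in N(v)} \Pr[u \notin I \mid v \notin I] = p_G^d. \]

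Substituting $q_G \geq p_G^d$ into $\alpha_G = \lam(1-\alpha_G)\,q_G$ yields
\[ \frac{\alpha_G}{\lam(1-\alpha_G)} \geq \left(\frac{1-2\alpha_G}{1-\alpha_G}\right)^d, \]
and the function $F(x) := x/(\lam(1-x)) - ((1-2x)/(1-x))^d$ is strictly increasing on $(0,1/2)$ with unique root $\alpha_{T_d}(\lam)$, forcing $\alpha_G \geq \alpha_{T_d}$. The hard part will be ensuring strict inequality $\alpha_G > \alpha_{T_d}$: this reduces to strict FKG for some pair of distinct neighbors of $v$, which I would verify using the fact that $G - v$ still connects them by paths (as $G$ is finite, connected, of degree $d \geq 2$), producing strict conditional correlation. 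The main obstacle I anticipate is making this strict FKG step uniform across all vertex-transitive bipartite $G$, whereas non-strict FKG is immediate from the spin-flip.
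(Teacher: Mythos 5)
Your proposal is correct and follows essentially the same route as the paper: vertex-transitivity gives the identity $\alpha_G = \lam(1-\alpha_G)\,\Pr[N(v)\cap I=\emptyset\mid v\notin I]$, the FKG/positive-association property of the hard-core model on bipartite graphs (the paper's Lemma~\ref{lem:bipariteCor}, applied to $G\setminus v$, which is the same as conditioning on $v\notin I$) bounds that conditional probability below by $\bigl(\tfrac{1-2\alpha_G}{1-\alpha_G}\bigr)^d$, and monotonicity of the two sides of the tree equation finishes the argument. The only cosmetic difference is that you correlate the events $\{u\notin I\}$ directly while the paper correlates the ``uncovered'' indicators and takes $\E[(1+\lam)^{-\sum \tilde Y_u}]$ — these coincide since $G$ is triangle-free — and the strictness you flag is handled exactly as you anticipate, via strict correlation for two neighbors of $v$ lying in the same component of $G\setminus v$.
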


The corresponding statement for the normalized log partition function (the integrated version of Theorem~\ref{thm:lowerbound}) holds without the condition of vertex transitivity~\cite{ruozzi2012bethe}.  Theorem~\ref{thm:lowerbound} itself may not hold without vertex transitivity (see Section 5 of~\cite{csikvari2014matchings} for a related discussion about matchings). For $\lam \le \lam_c(T_d) = \frac{(d-1)^{d-1}}{(d-2)^d}$ (the uniqueness threshold of the hard-core model on $T_d$), the  bound in Theorem~\ref{thm:lowerbound} is asymptotically tight for this class of graphs. From the results of Weitz \cite{weitz2006counting}, any sequence of graphs $G_n$ that converges locally (in the sense of Benjamini-Schramm \cite{benjamini2011recurrence}) to $T_d$ has occupancy fraction $\alpha_{T_d}(\lam) +o(1)$ as $n \to \infty$; for example, we can take a sequence of bipartite Cayley graphs of large girth. 

\section{Matchings}

The matching polynomial of a graph $G$ is
\[ M_G(\lam) = \sum_{H \in \mathcal M} \lam^{|H|}  \]
where $\mathcal M$ is the set of all matchings of $G$ (including the empty matching) and $|H|$ is the number of edges in the matching $H$. Just as in the hard-core model above we can define a probability distribution over matchings:
\[ \Pr_\lam [H] = \frac{ \lam^{|H|} }{M_G(\lam) } \,. \]

This defines the monomer-dimer model from statistical physics \cite{heilmann1972theory}: dimers are  edges of the random matching $H$ and monomers the unmatched vertices.  

The \textit{edge occupancy fraction}, or the dimer density, is the expected fraction of the edges of $G$ in such a random matching:
\begin{equation*}
\alpha^M_G(\lam) = \frac{1}{|E(G)|} \sum_{e \in G} \Pr[e \in H] = \frac{1}{|E(G)|} \frac{\lam M_G^\prime(\lam)}{M_G(\lam)} \,.
\end{equation*}

Our next result is  an upper bound on the edge occupancy fraction of any $d$-regular graph:

\begin{theorem}
\label{thm:matchingGeneral}
For all $d$-regular graphs $G$ and all $\lam>0$, we have
\[ \alpha^M_G(\lam) \le \alpha^M_{K_{d,d}}(\lam) \,. \]
The maximum is achieved only by unions of copies of $K_{d,d}$.  
\end{theorem}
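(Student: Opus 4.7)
The strategy follows the constrained-optimization approach of Theorem~\ref{thm:occupy}. The plan is to identify a local random variable $K$ such that $\alpha^M_G(\lam) = \E[g(K)]/d$ for an explicit function $g$, derive a family of linear identities on the distribution of $K$ (or of a slightly richer local observable) from $d$-regularity alone, and then solve the resulting linear program to show its optimum is attained by the distribution induced by $K_{d,d}$.

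For the first step, pick a uniformly random vertex $v$ and condition on $M'$, the restriction of the random matching $M$ to edges of $G$ not incident to $v$. Let $K$ count the neighbors of $v$ left unsaturated by $M'$. A direct computation on the star at $v$ gives
\[
\Pr[v \text{ is matched in } M \mid M'] \;=\; \frac{K\lam}{1+K\lam}\,,
\]
so by $d$-regularity $\alpha^M_G(\lam) = \frac{1}{d}\E\!\left[\frac{K\lam}{1+K\lam}\right]$. A first-moment identity follows from writing ``$u$ is unsaturated in $M'$'' as the disjoint union of ``$u$ is unmatched in $M$'' and ``$\{v,u\}\in M$''; averaging over random directed edges $(v,u)$ and using regularity then yields
\[
\E[K] \;=\; d - (d-1)\,\E\!\left[\frac{K\lam}{1+K\lam}\right].
\]

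Further linear identities should come either from iterating this argument on pairs (or larger tuples) of neighbors of $v$, or from passing to a random-edge viewpoint: pick $e=\{u,v\}$ uniformly and condition on the projection $M''$ of $M$ to edges incident to neither $u$ nor $v$. In the generic case (no common neighbor of $u$ and $v$) a star-at-edge calculation gives
\[
\Pr[e\in M \mid M''] \;=\; \frac{\lam}{(1+A\lam)(1+B\lam)+\lam}\,,
\]
where $A$ and $B$ count the $M''$-unsaturated neighbors of $u$ and of $v$ (other than $v$ and $u$). Linearity of expectation and edge double-counting, applied to the directed-edge marginals of $(A,B)$, should then produce the linear identities needed to close the LP. The final step is to exhibit an explicit dual certificate---a non-negative combination of the constraints giving a pointwise upper bound on the objective $\lam/((1+A\lam)(1+B\lam)+\lam)$ (or on $K\lam/(1+K\lam)$) that is tight precisely on the local distribution induced by $K_{d,d}$, where bipartiteness forces $A=B$ and the external matching lives in $K_{d-1,d-1}$, so the extremal law reduces to a one-parameter family with explicit statistics.

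The main obstacle is identifying the correct collection of moment identities: the single first-moment LP sketched above is already tight for $d=2$ but strictly weaker than $\alpha^M_{K_{d,d}}(\lam)$ for $d\ge 3$, so one must bring in either second-moment information about $K$ or the full bivariate structure of $(A,B)$, and the resulting dual certificate will be the output of a $d$-dependent finite optimization. A secondary issue is handling edges whose endpoints share common neighbors, since then the extension polynomial no longer factors as $(1+A\lam)(1+B\lam)+\lam$; however, triangles through $e$ only decrease $\Pr[e\in M\mid M'']$, so the bookkeeping should feed into the inequality side of the argument rather than obstruct it. Once the LP is closed, the equality case follows by standard complementary slackness: pointwise equality in the dual pins down the local law at every edge, and a short combinatorial argument then forces $G$ to be a disjoint union of copies of $K_{d,d}$.
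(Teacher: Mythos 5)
Your overall framework is the right one and matches the paper's: pass to a random edge $e$, record the numbers of externally uncovered neighbors on each side (your $(A,B)$, the paper's $(L,R)$ together with a triangle count $K$), express $\alpha^M_G$ as an expectation of an explicit rational function of this local configuration, and close a linear program with a dual certificate tight exactly on the $K_{d,d}$ distribution. But the proposal stops exactly where the proof becomes hard, and you say so yourself: ``the main obstacle is identifying the correct collection of moment identities.'' The paper's key idea here is concrete and is missing from your sketch: for each $t=0,\dots,d-2$ one equates two expressions for the probability that a uniformly random side of a uniformly random edge has exactly $t$ uncovered neighbors --- once computed from the perspective of $e$ itself, and once from the perspective of a uniformly chosen incident edge $f$ on that side. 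This gives the $d-1$ constraints (beyond normalization) needed to pin down the $d$-point support of the $K_{d,d}$ distribution; neither your single first-moment identity nor an unspecified ``edge double-counting'' supplies them. Moreover, the dual feasibility check is not a routine ``$d$-dependent finite optimization'': it must be carried out uniformly in $d$, and in the paper it occupies most of the argument, requiring an explicit recursive determination of the dual variables, a closed form for the increment function $F_d(t)$ in terms of matching polynomials of $K_{\ell,\ell}$, a Laguerre-type three-term identity, and a double induction to show $F_d$ is strictly increasing.

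A second genuine problem is your treatment of triangles through $e$. You propose to dismiss them on the grounds that they ``only decrease $\Pr[e\in M\mid M'']$,'' but configurations with common neighbors do not merely perturb the objective --- they appear in the constraints as well, so a pointwise monotonicity of the objective does not let you drop them from the LP. The paper instead carries the triangle count $k$ as a third parameter of the configuration, writes the dual constraint $L(i,j,k)\ge 0$ for all triples, and reduces the $k>0$ cases to the $k=0$ cases via the identity relating $L(i-1,j-1,k+1)$ to $L(i,j,k)$ and the monotonicity of $F_d$; this step has to be proved, not assumed. Finally, your equality analysis is stated correctly in outline (complementary slackness plus uniqueness of the feasible distribution supported on the symmetric triangle-free configurations), but it depends on having strict inequality in all the other dual constraints, which again is only available after the full dual verification you have not supplied.
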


This states that the normalized logarithmic derivative of $M_G(\lam)$ is maximized by $K_{d,d}$, and hence via integration that $K_{d,d}$ (and thus also $H_{d,n}$) maximizes $\frac{1}{|E(G)|} \log M_G(\lam)$ for any $\lam >0$. This resolves Conjecture 7.1 in \cite{galvin2014three}. Br\'egman's theorem \cite{bregman1973some} says that the number of perfect matchings of a $d$-regular, $n$-vertex bipartite graph is maximized by $H_{d,n}$, and this was extended by Kahn and Lov\'asz to all $d$-regular graphs (see \cite{galvin2014three} for a full discussion). Our result on $M_G(\lam)$ extends this: letting $\lam \to \infty$ recovers the result for perfect matchings, while setting $\lam =1$ shows that $H_{d,n}$ maximizes the total number of matchings  of any $d$-regular graph on $n$ vertices.

In Section \ref{sec:givensize} we use Theorem~\ref{thm:matchingGeneral} to give new upper bounds on the number of matchings of a given size in $d$-regular graphs. We then use these bounds to prove the `asymptotic upper matching conjecture' of Friedland, Krop, Lundow, and Markstr{\"o}m~\cite{friedland2008validations}.

\section{Related work}

The results of Kahn \cite{kahn2001entropy,kahn2002entropy}, Galvin and Tetali \cite{galvin2004weighted}, and  Zhao \cite{zhao2010number} culminating in the fact that $\frac{1}{|V(G)|} \log P_G(\lam)$ is maximized over $d$-regular graphs by $K_{d,d}$ are based on the entropy method, a powerful tool for the type of problems we address here. Apart from the results mentioned above, see \cite{radhakrishnan20036} and \cite{galvin2014three} for surveys of the method. A direct application of the method requires the graph $G$ to be bipartite. Zhao \cite{zhao2011bipartite} showed that in some, but not all cases, this restriction can be removed by using a `bipartite swapping trick'.  An entropy-free proof of Galvin and Tetali's general theorem on counting homomorphisms was recently given by Lubetzky and Zhao~\cite{lubetzky2014replica}.  Our method also does not use entropy, but in contrast to the other proofs it works directly for all $d$-regular graphs, without a reduction to the bipartite case. The method deals directly with the hard-core model instead of counting homomorphisms and seems to require more problem-specific information than the entropy method; a question for future work is to extend the method to a more general class of homomorphisms.  

The technique of writing the expected size of an independent set in two ways (as we do here) was used by Shearer \cite{shearer1995independence} in proving lower bounds on the average size of an independent set in $K_r$-free graphs and then by Alon \cite{alon1996independence} for graphs in which all vertex neighborhoods are $r$-colorable. The idea of bounding the occupancy fraction instead of the partition function comes in part from work of the third author \cite{perkins2015birthday}  in improving, at low densities, the bounds on matchings of a given size in Ilinca and Kahn \cite{ilinca2013asymptotics}  and independent sets of a given size in Carroll, Galvin, and Tetali \cite{carroll2009matchings}. The use of linear programming for counting graph homomorphisms appears in Kopparty and Rossman \cite{kopparty2011homomorphism}, where they use a combination of entropy and linear programming  to compute a related quantity, the homomorphism domination exponent, in chordal and series-parallel graphs.

For matchings, Carroll, Galvin, and Tetali~\cite{carroll2009matchings} used the entropy method to give an upper bound of $\frac{1}{2} \log (1+d \lam)$ on $\frac{1}{|V(G)|} \log M_G(\lam)$ over $d$-regular graphs. It was previously conjectured (eg.\ \cite{friedland2008number,galvin2014three}) that  $K_{d,d}$ maximizes $\frac{1}{|V(G)|} \log M_G(\lam)$ over all $d$-regular graphs. This is an implication of our Theorem~\ref{thm:matchingGeneral}.

In \cite{csikvari2014lower}, Csikv{\'a}ri proved  the `lower matching conjecture' of \cite{friedland2008number} and in \cite{csikvari2014matchings} gave a new lower bound on the number of perfect matchings of $d$-regular, vertex-transitive, bipartite graphs, in both comparing an arbitrary graph with the infinite $d$-regular tree (see also the recent extension by Lelarge \cite{lelarge2015counting} to  irregular graphs). Proposition 2.10 in \cite{csikvari2014matchings} states that the edge occupancy fraction of any $d$-regular, vertex-transitive, bipartite graph is at least that of the infinite $d$-regular tree; in Theorem~\ref{thm:lowerbound} we prove an analogous result for independent sets.  Csikv{\'a}ri's techniques in the two papers are different than the methods of this paper, but similar in that he bounds the occupancy fraction instead of directly working with the partition function.  His results rely on an elegant interplay between the Heilman-Lieb theorem \cite{heilmann1972theory} and Benjamini-Schramm convergence of bounded-degree graphs.   

In statistical physics, the analogue of the occupancy fraction in a general spin system is called the \textit{mean magnetization}; on general graphs it is $\#P$-hard to compute the magnetization in the ferromagnetic Ising model, the monomer-dimer model, and the hard-core model~\cite{sinclair2014lee,schulman2015symbolic}. 

\section{The Method}
\label{sec:themethod}
To introduce our method, we start by proving Theorem~\ref{thm:occupy} under the assumption that $G$ is triangle-free.  In what follows $I$  will denote the random independent set drawn according to the hard-core model with fugacity $\lam$ on a $d$-regular, $n$-vertex graph $G$.

We say a vertex $v$ is \textit{occupied} if $v \in I$ and \textit{uncovered} if none of its neighbors are in $I$: $N(v) \cap I = \emptyset$. Let $p_v$ be the probability $v$ is occupied and $q_v$ be the probability $v$ is uncovered.  The idea of considering $q_v$ appears in Kahn's paper \cite{kahn2001entropy}. 

 We will show that for every $\lam>0$ and any triangle-free $G$, $\alpha_G(\lam)$ is maximized by $K_{d,d}$.  (It is easy to see by linearity of expectation or by manipulating the partition function that the occupancy fraction is the same for any number of copies of $K_{d,d}$).  

Letting $\alpha=\alpha_G(\lam)$, we write
\begin{align}
\nonumber
\alpha & =  \frac{1}{n} \sum_{v \in G} p_v \\
\label{eq:pvqv}
&= \frac{1}{n} \sum_{v \in G} \frac{\lam}{1+ \lam} q_v \\
\label{eq:uncov}
&= \frac{\lam}{1+ \lam} \cdot \frac{1}{n} \sum_{v \in G} \sum_{j=0}^d \Pr[j \text{ neighbors of } v \text{ are uncovered}] \cdot (1+\lam)^{-j}  \\
\nonumber
&= \frac{\lam}{1+ \lam} \cdot \E [ (1+\lam)^{-Y}]
\end{align}
where $Y$ is the random variable that counts the number of uncovered neighbors of a uniformly chosen vertex from $G$, with  respect to the random independent set $I$.  $Y$ is an integer valued random variable bounded between $0$ and $d$.  
Equation (\ref{eq:pvqv}) follows since $v$ must be uncovered if it is to be occupied, and conditioning on being uncovered $v$ is occupied with probability $\frac{\lam}{1+ \lam}$.  Equation (\ref{eq:uncov}) is similar: conditioned on the event that $u_1, \dotsc, u_j$, neighbors of $v$, are all uncovered, the probability that none are occupied is $(1+\lam)^{-j}$. 
This is where we use the fact that $G$ is triangle-free: there are no edges between neighbors of $v$.

We also have
\begin{align*}
\E Y &= \frac{1}{n} \sum_{v \in G} \sum_{u \sim v} q_u = d \cdot \frac{1+ \lam}{\lam} \alpha
\end{align*}
since each $u$ appears in the double sum exactly $d$ times as $G$ is $d$-regular.  This gives the identity
\begin{equation*}
\E Y = d \cdot \E [ (1+\lam)^{-Y}]\,.
\end{equation*}
Now let 
\[ \alpha^* = \frac{\lam}{d (1+ \lam)} \cdot  \sup_{0 \le Y \le d} \{   \E Y: \E Y = d \cdot \E [ (1+\lam)^{-Y}] \}  \]
where the sup is over all distributions of random variables $Y$ bounded between $0$ and $d$.  

For any $\lam$ and $d$ there is a unique distribution $Y$ supported only on $0$ and $d$ that satisfies the constraint $\E Y = d \cdot \E [ (1+\lam)^{-Y}]$.  We claim that the sup is uniquely achieved by this distribution. The claim follows from convexity, but we defer details to the proof of a more general statement in Section \ref{sec:triangles}. Since the distribution $Y$ associated to $H_{d,n}$ satisfies the constraint and is supported on $0$ and $d$, it must maximize $\alpha$.  Since unions of copies of $K_{d,d}$ are the only graphs whose associated distribution is supported on $0$ and $d$, they uniquely achieve the maximum. 

To recap, the method is the following:
\begin{enumerate}[(i)]
\item Define a random variable $Y$ using randomness in the hard-core model on $G$ and in choosing a random vertex of $G$. In the proof above, $Y$ was the number of uncovered neighbors of a random vertex.
\item Write $\alpha$ as the expectation of a function of $Y$.
\item Add constraints that the random variable $Y$ must satisfy for any graph $G$ in our class.  In the case above, the constraints were that the two ways of writing $\alpha$ are equal and that $0 \le Y \le d$.
\item Relax the optimization problem from random variables $Y$ induced by graphs to all random variables $Y$ that satisfy the constraints. Show that the unique maximizer of $\alpha$ is the distribution associated to the extremal graph, and therefore $\alpha$ is maximized by the extremal graph.  
\end{enumerate}

In Section~\ref{sec:triangles} we give the full proof of Theorem~\ref{thm:occupy}.  We prove the lower bound, Theorem~\ref{thm:lowerbound}, in Section~\ref{sec:LB}.  We turn to matchings and Theorem~\ref{thm:matchingGeneral} in Section~\ref{sec:matchingproofs} before giving new bounds on the number of independent sets and matchings of a given size in Section~\ref{sec:givensize}.

\section{Proof of Theorem~\ref{thm:occupy}}
\label{sec:triangles}

For a vertex $v \in G$ and an independent set $I$, we define the \textit{free neighborhood} of $v$ to be the subgraph of $G$ induced by the neighbors of $v$ which are not adjacent to any vertex in $I \setminus N(v)$. We use the convention $v \notin N(v)$.  The vertices in the free neighborhood may be uncovered or covered, but if they are covered it must be from another vertex in the free neighborhood.  In a triangle-free graph the free neighborhood is always a set (possibly empty) of isolated vertices.  Note that if $v \in I$, then the free neighborhood of $v$ is necessarily empty.  

Let $ C$ be the random free neighborhood of $v$ when we draw $I$ according to the hard-core model and choose vertex $v$ uniformly at random from $G$.  For any graph $F$, let $p_F$ be the probability that $ C$ is isomorphic to $F$.  Also let $P_C = P_C(\lam)$ be the independence polynomial of $C$ at fugacity $\lam$.  Then we can write $\alpha$ in two ways:
\begin{align}
\label{eq:tri1}
\alpha  &= \frac{\lam}{1+ \lam}  \E \left [ \frac{1}{P_C(\lam)} \right ] 
\intertext{and}
\label{eq:tri2}
 \alpha  &= \frac{\lam}{d} \E \left[  \frac{P_C^\prime(\lam)}{P_C(\lam)} \right] 
 \end{align} 
where in both equations the expectations are over the random free neighborhood $C$. Equation (\ref{eq:tri1}) follows since $v$ itself is uncovered if and only if all vertices in its free neighborhood are unoccupied. Given that the free neighborhood is isomorphic to $C$, all vertices in the free neighborhood are unoccupied with probability $\frac{1}{P_C(\lam)}$.  Equation (\ref{eq:tri2}) follows by counting the expected number of occupied neighbors of $v$ and dividing by $d$: only vertices in the free neighborhood can be occupied, and, given $C$, the expected number of occupied vertices in the free neighborhood is  $\frac{\lam P_C^\prime(\lam)}{P_C(\lam)}$. 

Now let 
\begin{equation}\label{eq:Cconstr}
\alpha^* = \frac{\lam}{1+ \lam} \cdot \sup \left \{  \E \left [ \frac{1}{P_C(\lam)} \right ]  : \frac{d}{1+\lam} \cdot \E \left [ \frac{1}{P_C(\lam)} \right ] = \E \left[  \frac{P_C^\prime(\lam)}{P_C(\lam)} \right]   \right \}
\end{equation}
where the sup is over all distributions of random free neighborhoods $C$ supported on graphs of at most $d$ vertices. From (\ref{eq:tri1}) and (\ref{eq:tri2}), the distribution obtained from $G$ satisfies the constraint above.  

We claim that for any $\lam>0$, $\alpha^*$ is achieved uniquely by a distribution supported only on the empty graph and the graph consisting of $d$ isolated vertices, $\overline {K_d}$.  The theorem follows since  disjoint unions of copies of $K_{d,d}$ are the only graphs for which the free neighborhood can only be the empty set or $\overline {K_d}$, and since there is a unique distribution with this support satisfying the constraint.  To prove this claim we use the language of linear programming, see e.g.~\cite{boyd2004convex}.

\subsection{The linear program}
 Let $p_C$ be the probability of a given free neighborhood $C$, and let $\mathcal C_d$ be the set of all graphs on at most $d$ vertices, including the empty graph.  Equation (\ref{eq:Cconstr}) defines a linear program with the decision variables $\{ p_C \} _{C \in \mathcal C_d}$.  We write the linear program in standard form as
 \begin{align*}
\alpha^* = \max \,   \frac{\lam}{2(1+\lam)} &\sum_{C \in \mathcal C_d} p_C ( a_C +b_C)   \text{ subject to } \\
&\sum_{C \in \mathcal C_d} p_C = 1 \\
&\sum_{C \in \mathcal C_d} p_C( a_C -b_C) =0 \\
&p_C \ge 0 \, \, \,  \forall C \in \mathcal C_d 
\end{align*}
where $a_C =  \frac{1}{ P_C(\lam)}$ and $b_C = \frac{(1+\lam) P^\prime_C(\lam)}{d P_C(\lam)}$. We can calculate $a_{\emptyset} = 1$, $b_{\emptyset} = 0$, $a_{\overline{K}_d} = (1+\lam)^{-d}$, $b_{\overline{K}_d} = 1$. 
The solution $p_{\emptyset} = \frac{1-(1+\lam)^{-d}}{2 -(1+ \lam)^{-d}} $ and $p_{\overline {K_d}} = \frac{1}{2 -(1+ \lam)^{-d}}$ is the unique feasible solution supported only on $\emptyset $ and $\overline{K}_d$, and gives the objective value $\frac{ \lam (1+\lam)^{d-1}}{2 (1+\lam)^d -1}$.  Our claim is that this is the unique maximum.  

The dual linear program is
\begin{align*}
\alpha^* =  &\min \frac{\lam}{2(1+\lam)} \Lambda_1 \text { s.t. } \\
&\Lambda_1 + \Lambda_2 (a_C - b_C)  \ge a_C + b_C \, \, \, \forall C \in \mathcal C_d
\end{align*}
where $\Lambda_1, \Lambda_2$ are the decision variables.  

Guided by the candidate solution above we set $\Lambda_1 = \frac{2}{2- (1+\lam)^{-d}}$ and $\Lambda_2= 1-\Lambda_1$. With these values, the dual constraints corresponding to $C = \emptyset, \overline{K}_d$ hold with equality, and the objective value is $\frac{\lam}{2(1+\lam)} \Lambda_1=\frac{ \lam (1+\lam)^{d-1}}{2 (1+\lam)^d -1}$.   To finish the proof we claim that $\Lambda_1, \Lambda_2$ are feasible for the dual program, which means showing that 
\[ \Lambda_1 + \Lambda_2 (a_C - b_C)  \ge a_C + b_C  \]
for all $C \in \mathcal C_d$. We will show that in fact the inequality holds strictly for all $C \in \mathcal C_d \setminus \{\emptyset, \overline {K}_d \}$.  Substituting our values of $\Lam_1, \Lam_2$, this inequality reduces to 
\begin{equation}
\label{eq:indFact}
 \frac{\lam P_C^\prime(\lam)}{P_C(\lam)-1} < \frac{ \lam d (1+\lam)^{d-1}}{(1+\lam)^d- 1} \, .
\end{equation}
The LHS of (\ref{eq:indFact}) is the expected size of the random independent set from the hard-core model on $C$ conditioned on it being non-empty.  The RHS is the same quantity for $\overline{K}_d$.

Inequality \eqref{eq:indFact} follows directly from the observation that, over all $C\in \mathcal C_d$, the graph $\overline{K}_d$ maximizes the ratio of subsequent terms in the polynomial $P_C$. 
Let $t_i=\binom{d}{i}$, the coefficient of $\lam^i$ in $P_{\overline{K}_d}$, and write $P_C= 1+\sum_{i=1}^d r_i\lam^i$. 
We have $(i+1)t_{i+1}=(d-i)t_i$ and $(i+1)r_{i+1}\leq(d-i)r_i$ by counting independent sets of size $i+1$. 

To verify \eqref{eq:indFact} we show that for each $1\leq k \leq d$ the coefficient $c_k$ of $\lam^k$ in the polynomial  $(\lam P'_{\overline{K}_d})(P_C-1) - (\lam P'_C)(P_{\overline{K}_d}-1)$ is non-negative. 
We have
\begin{align*}
s_k &= \sum_{i=1}^{k-1}it_ir_{k-i} - \sum_{i=1}^{k-1} it_{k-i}r_i\\
&= \sum_{i=1}^{\lfloor k/2\rfloor } (k-2i)(t_{k-i}r_i - t_ir_{k-i})\,.
\end{align*}
Observe that term-by-term the above sum giving $s_k$ is non-negative by comparing the ratio of successive coefficients in $P_{\overline{K}_d}$ and $P_C$. 
Furthermore, if $P_C\neq P_{\overline{K}_d}$ then at least one $s_k$ must be positive, which completes the claim.

To see the optimizer is unique note that strict inequality in the dual constraints corresponding to configurations besides $\emptyset$ and $\overline{K}_d$ implies by complementary slackness that any optimal solution is supported on these two configurations, and there is a unique such distribution.

\section{Proof of Theorem~\ref{thm:lowerbound}}
\label{sec:LB}

To prove Theorem~\ref{thm:lowerbound} we will use the fact that occupancies of vertices on the same side of a bipartite graph are positively correlated:

\begin{lemma}
\label{lem:bipariteCor}
Let $G$ be a bipartite graph with bipartition $\mathcal E \cup \mathcal O$.  For any $r \ge 2$, let $u_1, u_2, \dots , u_r \in \mathcal E$.  Then
\[ \Pr[ \{ u_1 , \dots , u_r \}  \subseteq I] \ge \prod_{i=1}^r p_{u_i} \]
in the hard-core model for any $\lambda$.  Similarly, let $U$ be the random set of uncovered vertices of $G$.  Then 
\[ \Pr[ \{ u_1 , \dots , u_r \}  \subseteq U] \ge \prod_{i=1}^r q_{u_i} \]
Moreover, the inequalities are strict when $\lam >0$ and at least two of the $u_i$'s are in the same connected component of $G$.  
\end{lemma}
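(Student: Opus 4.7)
The plan is to derive both inequalities from a single FKG-type argument applied to the marginal distribution $\mu$ of $I\cap\mathcal O$. Since each $u_i\in\mathcal E$, the event $\{u_i\in U\}=\{N(u_i)\cap(I\cap\mathcal O)=\emptyset\}$ depends only on $S:=I\cap\mathcal O$; moreover, conditional on $S$, the events $\{u_i\in I\}$ are independent and each occurs with probability $\tfrac{\lam}{1+\lam}\mathbf{1}[u_i\in U]$, since $u_i$ has no neighbours in $\mathcal E$. This yields the identity $p_{u_i}=\tfrac{\lam}{1+\lam}q_{u_i}$ together with
\[
 \Pr\!\bigl[\{u_1,\dots,u_r\}\subseteq I\bigr]=\Bigl(\tfrac{\lam}{1+\lam}\Bigr)^{\!r}\Pr\!\bigl[\{u_1,\dots,u_r\}\subseteq U\bigr],
\]
so the two inequalities of the lemma are equivalent, and it suffices to prove the version for uncovered vertices.

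Summing out the $\mathcal E$-coordinates, the vertices of $\mathcal E\setminus N(S)$ may be chosen freely, contributing a factor $(1+\lam)^{|\mathcal E|-|N(S)|}$, so
\[
 \mu(S)\;\propto\;\lam^{|S|}(1+\lam)^{-|N(S)|}\quad\text{for all }S\subseteq\mathcal O.
\]
I would then check that $\mu$ is log-supermodular on the Boolean lattice $\{0,1\}^{\mathcal O}$: the factor $\lam^{|S|}$ is log-modular, and log-supermodularity of $(1+\lam)^{-|N(S)|}$ is equivalent to submodularity of $|N(\cdot)|$, which follows from $N(A\cup B)=N(A)\cup N(B)$ and $N(A\cap B)\subseteq N(A)\cap N(B)$. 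Each event $\{u_i\in U\}$ is decreasing in $S$, so the FKG inequality applied iteratively to the decreasing indicators $\mathbf 1[u_i\in U]$ gives
\[
 \Pr\!\bigl[\{u_1,\dots,u_r\}\subseteq U\bigr]\;\geq\;\prod_{i=1}^r q_{u_i}.
\]

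The main obstacle is upgrading this to a strict inequality when $\lam>0$ and at least two of the vertices, say $u_i$ and $u_j$, lie in the same connected component $G_0$ of $G$. The reduction to pairs is clean: by FKG applied to the decreasing events $\{u_i,u_j\in U\}$ and $\bigcap_{l\neq i,j}\{u_l\in U\}$,
\[
 \Pr\!\bigl[\{u_1,\dots,u_r\}\subseteq U\bigr]\;\geq\;\Pr[u_i,u_j\in U]\cdot\prod_{l\neq i,j}q_{u_l},
\]
so it suffices to prove the pairwise strict bound $\Pr[u_i,u_j\in U]>q_{u_i}q_{u_j}$. For this I would condition on the configuration of $I$ outside $G_0$ to reduce to a connected bipartite graph, where $\mu$ has full support on $\{0,1\}^{\mathcal O\cap V(G_0)}$, and argue that the neighbourhoods of $u_i$ and $u_j$ in $\mathcal O$ are linked by paths in $G_0$ in a way that forces strict positive correlation of the two decreasing indicators under the log-supermodular $\mu$. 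Making this precise — most likely through a direct swap/covariance computation along a shortest path $u_i=v_0,v_1,\dots,v_{2k}=u_j$, or via a strict Holley-type monotonicity inequality — is the most delicate step of the argument.
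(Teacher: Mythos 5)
Your argument for the non-strict inequalities is correct and takes a genuinely different route from the paper's. The paper treats the occupied version as primary: it quotes the known fact that occupancies of two vertices in the same part of a connected bipartite graph are (strictly) positively correlated --- a consequence of FKG established in \cite{van1994percolation} (see also Corollary 1.5 of \cite{bencs2014christoffel}) --- extends to $r$ vertices by induction, and then transfers to uncovered vertices exactly as you do, via $p_{u_i}=\tfrac{\lam}{1+\lam}q_{u_i}$ and the conditional independence of the events $\{u_i\in I\}$ given $I\cap\mathcal O$. You instead make the FKG input self-contained: the marginal $\mu(S)\propto\lam^{|S|}(1+\lam)^{-|N(S)|}$ on subsets of $\mathcal O$, the submodularity of $|N(\cdot)|$ giving log-supermodularity of $\mu$, and the observation that each $\{u_i\in U\}$ is a decreasing event in $S$ are all correct, and iterating FKG then gives positive association of the whole family in one stroke. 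This buys a proof that does not outsource the correlation inequality to the literature, at the modest cost of verifying the lattice condition by hand; both the uncovered and occupied statements then follow together.

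The one genuine gap is strictness. Your reduction to the pairwise statement $\Pr[u_i,u_j\in U]>q_{u_i}q_{u_j}$ is fine (and the conditioning on the configuration outside $G_0$ is unnecessary, since distinct components are independent under the hard-core model, so one may work in $G_0$ directly). But the pairwise strict correlation itself is only sketched: plain FKG never yields strict inequality, and ``linked by paths'' does not by itself force strict positive correlation of two decreasing events under a log-supermodular measure, so the swap/covariance computation you defer is precisely the missing content. To close it you must either carry out that computation (e.g.\ propagate, along a path from $u_i$ to $u_j$, the fact that conditioning a vertex to be uncovered strictly changes the occupation probabilities of its neighbours --- this is the Weitz-tree heuristic the paper mentions) or invoke the strict form of same-side positive association proved in \cite{van1994percolation} and \cite{bencs2014christoffel}. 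Since the paper itself handles this step by citation rather than by a self-contained argument, your proposal is at parity with it once that reference or computation is supplied.
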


The first part of the lemma follows by induction on $r$ from the fact that $\Pr[ u_1, u_2 \in I] > \Pr[u_1 \in I] \cdot \Pr[u_2 \in I]$ when $u_1, u_2$ are in the same connected component and in the same part of the bipartition of $G$. In~\cite{van1994percolation} this is shown to be a consequence of the FKG inequality; see also~\cite{fill2001stochastic} and Corollary 1.5 of~\cite{bencs2014christoffel}.  An intuitive reason for this fact (which can be turned into a rigorous argument using Weitz's tree~\cite{weitz2006counting}), is that conditioning on the event that a vertex $v$ is occupied forbids its neighbors from being in the independent set; conditioning on the event that $v$ is not occupied increases the probability each of its neighbors are occupied, and these effects propagate through the bipartite graph.  

To prove the second part of the lemma, note that $p_{u_i} = \frac{\lam}{1+\lam} q_{u_i}$, and for $u_1, \dots , u_r \in \mathcal E$, $\Pr[ \{u_1, \dots , u_r \} \subseteq I] = (\frac{\lam}{1+ \lam}  )^r \Pr [ \{ u_1, \dots , u_r \} \subseteq U]$, since there are no edges between the $u_i$'s. Then the desired inequality follows from the first part of the lemma.

\begin{proof}[Proof of Theorem~\ref{thm:lowerbound}]
By vertex transitivity, for all $v$, $p_v=\alpha$ and $q_v = \frac{1+\lam}{\lam} \alpha$.  Fix a vertex $v$ and let $Y$  be the number of uncovered neighbors of $v$.  For $u \sim v$ let $Y_u$ be the indicator that $u$ is uncovered.
\begin{align*} \alpha &=  \frac{\lambda}{1+\lambda}\E[(1+\lambda)^{-Y}]\\
                      &= \frac{\lambda}{1+\lambda}\E[(1+\lambda)^{- \sum_{u \sim v}Y_u}]\\
                      &= \frac{\lambda}{1+\lambda}\Big(\alpha + (1-\alpha) \E[(1+\lambda)^{- \sum_{u \sim v}Y_u}|v \notin I]\Big)\,, \text{ hence} \\ 
\frac{\alpha}{\lambda(1-\alpha)} &= \E[(1+\lambda)^{- \sum_{u \sim v}Y_u}|v \notin I]\,.
\end{align*}
Now for $u \sim v$,  let $\tilde Y_u$ be the indicator that $u$ is uncovered, conditioned on the event $\{ v \notin I \}$. For each $u$, $\tilde Y_u$ has a Bernoulli($p$) distribution, where $p =\frac{1+ \lam}{\lam} \frac{\alpha}{1-\alpha}$, and by Lemma~\ref{lem:bipariteCor} applied to $G \setminus v$, the $\tilde Y_u$'s are positively correlated. This gives
\begin{align*}
\frac{\alpha}{\lam (1-\alpha)}    = \E [ (1+\lam)^{-\sum_{u \sim v} \tilde Y_u } ] & > \prod_{u \sim v} \E [ (1+\lam)^{- \tilde Y_u} ] = \left(1-p + \frac{p}{1+\lam}  \right )^d=\left(  \frac{1-2 \alpha}{1-\alpha} \right) ^d\,.
\end{align*}
The function $\frac{\alpha}{\lam (1-\alpha)}$ is increasing in $\alpha$, the function $\left(  \frac{1-2 \alpha}{1-\alpha} \right) ^d$ is decreasing in $\alpha$, and the two functions are equal at $\alpha= \alpha_{T_d}(\lam)$, so we conclude that $\alpha > \alpha_{T_d}(\lam)$. 
\end{proof}

\section{Proof of Theorem~\ref{thm:matchingGeneral}}
\label{sec:matchingproofs}

Recall that we use the notation $M_G(\lam)$ for the matching polynomial of a graph $G$, and let $H$ be a matching drawn from the monomer-dimer model at fugacity $\lam$. 

We  refer to an edge as covered if an incident edge is in the random matching $H$. 
Let $e$ be an edge of $G$ chosen uniformly at random, with an arbitrary left/right orientation chosen at random. In applying the method to matchings we introduce a subtle change of presentation. We now define the \textit{free neighborhood} $C$ to be the subgraph of $G$ containing all the incident edges to $e$ that are not covered by edges outside of both $e$ and its incident edges.  When considering independent sets, the free neighborhood was empty if the random vertex $v$ was in the independent set.  Here the presence or absence  in the matching of $e$ or an edge adjacent to $e$ does not affect $C$.  
Given $e$ and $C$, we use the term \textit{externally uncovered neighbor} to refer to an edge of $C$ incident to $e$.

The possible free neighborhoods $C$ are completely defined by three parameters: $L,R,K\in\{0,1,\dotsc,d-1\}$, counting the number of left and right neighboring edges in $C$ with an endpoint of degree $1$, and the number of triangles formed by $e$ and $C$.  
An example is pictured below.  
\begin{center}
\begin{tikzpicture}[line width=1pt,every node/.style={circle,fill,draw,inner sep=0,minimum size=4pt}]
	\node (lroot) at (-1,0) {};
	\node (rroot) at ( 1,0) {};
	
	\node (k1) at ($(0,1.732050807568877)$) {};
	\node (l1) at ($(lroot)+(120:2cm)$) {};
	\node (l2) at ($(lroot)+(180:2cm)$) {};
	\node (l3) at ($(lroot)+(240:2cm)$) {};

	\node (r1) at ($(rroot)+(30:2cm)$) {};
	\node (r2) at ($(rroot)+(-30:2cm)$) {};
	
	\node[draw=none, fill=none] (labele) at (0,-0.3) {$e$};
	\node[draw=none, fill=none] (labelk) at (0,2.2) {$K=1$};

	\draw[style=dotted] (lroot) --(rroot);
	
	\draw (lroot) -- (k1);
	\draw (lroot) -- (l1);
	\draw (lroot) -- (l2);
	\draw (lroot) -- (l3);
	
	\draw (rroot) -- (k1);
	\draw (rroot) -- (r1);
	\draw (rroot) -- (r2);
	
	\draw [decorate,decoration={brace,amplitude=10pt}] ($(r1)+(0.1,5pt)$) -- ($(r2)+(0.1,-5pt)$) node [fill=none,draw=none,midway,xshift=1cm] {$R=2$};
	\draw [decorate,decoration={brace,amplitude=10pt,mirror}] ($(l1)+(-1.2cm,5pt)$) -- ($(l3)+(-1.2cm,-5pt)$) node [fill=none,draw=none,midway,xshift=-1cm] {$L=3$};
\end{tikzpicture}
\end{center}
Let $q(i,j,k) = \Pr[L = i, R=j, K=k]$, and denote the matching polynomial for such a free neighborhood by $M_{i,j,k}$, where we can compute 
\[ M_{i,j,k}(\lam) = 1 + (i+j+2k)\lam + \big[k^2 + k(i+j-1) +ij\big]\lam^2 \,. \]
Conditioned on the event that the free neighborhood of $e$ is $C$, the random matching $H$ restricted to $e$ and its incident edges is distributed according to the monomer-dimer model on the graph $C$ with the edge $e$ added; the partition function of this model is $\lam + M_C(\lam)$, with the term $\lam$ corresponding to the event that $e \in H$. 

We can write $\alpha_M:=\alpha^M_G(\lam)$ as the expected fraction of edges incident to $e$ that are in the matching, as each edge in a $d$-regular graph is incident to exactly $2(d-1)$ other edges:
\begin{align*}
\alpha_M & = \frac{2}{dn} \sum_e  \sum_{f \sim e} \frac{1}{2(d-1)} \Pr[f \in H] \\
&= \E \left[ \frac{\lam M_C^\prime(\lam)}{2 (d-1) (\lam + M_C(\lam))}  \right ]  \\
&= \sum_{i,j,k} q(i,j,k) \frac{\lam M^\prime_{i,j,k}(\lam) }{ 2 (d-1) (\lam + M_{i,j,k}(\lam))} \, ,
\end{align*}
where the expectation in the second line is over the random free neighborhood $C$ resulting from the two-part experiment described above.  If we write the expected fraction of occupied neighbors of $e$ in a configuration as $\overline \alpha_M(i,j,k)=\frac{1}{2(d-1)}\frac{\lam M_{i,j,k}'}{\lam + M_{i,j,k}}$,   the above expression can be written $\alpha_M = \sum_{i,j,k}q(i,j,k)\overline\alpha_M(i,j,k)$.

\subsection{The linear program for matchings}

We now introduce additional constraints before optimizing $\alpha_M$ over distributions of free neighborhoods.  We could write multiple expressions for $\alpha_M$, equate them, and solve the maximization problem as we did for independent sets.  Using three expressions for $\alpha_M$ we were able to prove Theorem~\ref{thm:matchingGeneral} for the case $d=3$, in which the optimal distribution is supported on only three values: $q(0,0,0)$, $q(1,1,0)$, $q(2,2,0)$.  But in general we need at least $d-1$ constraints (in addition to the constraint that the $q(i,j,k)'s$ sum to one) as the distribution induced by $K_{d,d}$ is supported on $d$ values. 

Instead, we write, for all $t$, two expressions for the marginal probability that the number of  uncovered neighbors on a randomly chosen side of a random edge is equal to $t$.  We find the two expressions by choosing uniformly: a random edge $e$, a random side left or right, and $f$, a random neighboring edge of $e$  from the given side. 
We first calculate the probability that $e$ has $t$ uncovered neighbors on the side containing $f$, then we calculate the probability that $f$ has $t$ uncovered neighbors on the side containing $e$. 

Given a free neighborhood $C$ with $L=i$, $R=j$, and $K=k$, $e$ can have $0, 1, i+k-1,$ or $i+k$ uncovered left neighbors; an edge $f$ to the left of $e$ can have $0, 1, i+k-2, i+k-1, i+k$, or $i+k+1$ uncovered right neighbors (depending on whether $f$ itself is in the free neighborhood $C$).

Let $\gamma^e_{i,j,k}(t) = \Pr[ e \text { has } t \text{ uncovered left neighbors } | L=i, R=j, K=k]$ and $\gamma^f_{i,j,k}(t) = \Pr[ f \text { has } t \text{ uncovered right neighbors } | L=i, R=j, K=k]$, where $f$ is a uniformly chosen left neighbor of $e$.

\begin{claim}
Let $\beta_t = 1+ t \lam$. Then we have
\begin{align*}\numberthis\label{eq:gamE}
\gam^e_{i,j,k}(t) &= \frac{1}{\lam + M_{i,j,k}} \Bigl( \mathbf 1_{t=0} \cdot \lam + \mathbf 1_{t=1} \cdot [i \lam \beta_{j+k} + k \lam \beta_{j+k-1}] \\
	&\hspace{3.7cm}+ \mathbf 1_{t=i+k} \cdot \beta_j + \mathbf 1_{t=i+k-1} \cdot k \lam \Bigr) \\
\numberthis\label{eq:gamF}
\gam^f_{i,j,k}(t) &= \frac{1}{(d-1)(\lam + M_{i,j,k})} \Big( \mathbf 1_{t=0} \cdot \left[i \lam \beta_{j+k} + k \lam \beta_{j+k-1}\right] \\*
	&\hspace{1cm}+ \mathbf 1_{t=1} \cdot \left[(d-1)\lam + (d-2)(i \lam \beta_{j+k} + k \lam \beta_{j+k-1}  ) \right] \\*
	&\hspace{1cm}+ \mathbf 1_{t=i+k-2} \cdot \left[(i+k-1)k\lam\right] + \mathbf 1_{t=i+k-1} \cdot \left[(d-i-k)k\lam+(i+k)j\lam\right] \\*
	&\hspace{1cm}+ \mathbf 1_{t=i+k} \cdot \left[(d-1-i-k)j\lam+(i+k)\right] + \mathbf 1_{t=i+k+1} \cdot \left[d-1-i-k\right] \Bigr)\,.
\end{align*}
\end{claim}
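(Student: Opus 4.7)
The plan is to compute both formulas by conditioning on the free neighborhood $C$ of $e$ having parameters $(L,R,K)=(i,j,k)$ and summing over the matchings of $C\cup\{e\}$. As already noted in the excerpt, conditioning on $C$ makes $H_0:=H\cap(C\cup\{e\})$ distributed as the monomer-dimer model on $C\cup\{e\}$ with partition function $\lambda+M_{i,j,k}(\lambda)$. The key preliminary observation is that an edge $g$ incident to $e$ is uncovered in $H$ iff $g\in C$ and no edge of $H_0\setminus\{g\}$ meets an endpoint of $g$: if $g\notin C$, then by definition some outside edge at $g$'s non-$\{u,v\}$ endpoint lies in $H$ and covers $g$; if $g\in C$, no outside edges at $g$'s endpoints are in $H$, so the condition reduces to one about $H_0$ and the local geometry of $C$.

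For $\gamma^e_{i,j,k}(t)$ I enumerate the matchings $H_0$ of $C\cup\{e\}$ (empty; $\{e\}$; the four kinds of singletons in $C$; and the two-edge matchings of $C$) and tabulate the number of uncovered left neighbors of $e$: the empty matching gives $t=i+k$ with weight $1$; $\{e\}$ gives $t=0$ with weight $\lambda$; a left singleton $\{g\}$ in $C$ (pure left or left triangle) leaves only $g$ uncovered, contributing $(i+k)\lambda$ to $t=1$; a pure right singleton does not touch any left edge, so all $i+k$ left edges of $C$ remain uncovered, contributing $j\lambda$ to $t=i+k$; a right triangle singleton $\{vw_0\}$ additionally covers $uw_0$, contributing $k\lambda$ to $t=i+k-1$; and any two-edge matching of $C$ leaves only its left edge uncovered, contributing $[ij+k(i+j-1)+k^2]\lambda^2$ to $t=1$. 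Regrouping by $t$ and applying the identity $i\lambda\beta_{j+k}+k\lambda\beta_{j+k-1}=(i+k)\lambda+[i(j+k)+k(j+k-1)]\lambda^2$ reproduces the four indicator terms in \eqref{eq:gamE}.

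For $\gamma^f_{i,j,k}(t)$ I repeat the enumeration but average uniformly over the $d-1$ choices of left neighbor $f$ of $e$. The ``right neighbors of $f$'' are the remaining $d-1$ edges at $u$, namely $e$ and the other $d-2$ left neighbors of $e$. Here $e$ itself is uncovered iff $H_0\cap C=\emptyset$, and a left neighbor $h\ne f$ is uncovered iff $h\in C$, no other edge of $H_0$ meets $u$, and (if $h$ is a left triangle edge $uw_h$) the partner $vw_h$ is not in $H_0$. Splitting on $f$'s type (pure left, left triangle, or not in $C$; frequencies $i$, $k$, $d-1-i-k$) and running through each $H_0$-case: the empty matching contributes at $t=i+k$ when $f\in C$ and $t=i+k+1$ when $f\notin C$; $H_0=\{e\}$ contributes at $t=1$ for every $f$ (giving $(d-1)\lambda$); a left singleton $\{g\}$ contributes at $t=0$ when $f=g$ and $t=1$ otherwise (giving $(i+k)\lambda$ at $t=0$ and $(i+k)(d-2)\lambda$ at $t=1$); a pure right singleton contributes $j(i+k)\lambda$ at $t=i+k-1$ and $j(d-1-i-k)\lambda$ at $t=i+k$; a right triangle singleton $\{vw_0\}$ contributes $k(i+k-1)\lambda$ at $t=i+k-2$ (from $f$ pure left or a non-$uw_0$ triangle) and $k(d-i-k)\lambda$ at $t=i+k-1$ (from $f=uw_0$ or $f\notin C$); and a two-edge matching contributes $[ij+k(i+j-1)+k^2]\lambda^2$ at $t=0$ and $[ij+k(i+j-1)+k^2](d-2)\lambda^2$ at $t=1$. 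Collecting by $t$ and dividing by $(d-1)(\lambda+M_{i,j,k})$ reproduces the six indicator terms in \eqref{eq:gamF}.

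The main obstacle is the combinatorial bookkeeping for $\gamma^f$: six $H_0$-types cross with three $f$-types, and one must verify the contributions collapse exactly into the claimed indicators at $t\in\{0,1,i+k-2,i+k-1,i+k,i+k+1\}$. A useful sanity check is that summing the coefficients in \eqref{eq:gamF} over $t$ yields $(d-1)(\lambda+M_{i,j,k})$, which unfolds to the identity $(d-1)\{1+(i+j+2k+1)\lambda+[ij+k(i+j-1)+k^2]\lambda^2\}=(d-1)(\lambda+M_{i,j,k})$. Some care is needed when $i+k\le 2$ so that indicators like $\mathbf{1}_{t=i+k-2}$ coincide with $\mathbf{1}_{t=0}$; the explicit enumeration continues to give the correct totals, and the coefficients in \eqref{eq:gamF} vanish whenever the associated indicator would place $t$ outside the physical range.
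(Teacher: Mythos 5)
Your proposal is correct and follows essentially the same route as the paper: both condition on the free neighborhood, use that the restricted matching is monomer--dimer on $C$ plus $e$ with partition function $\lam + M_{i,j,k}$, and then do a direct case analysis over the local matching crossed with the choice of $f$ (the paper merely groups the singleton and two-edge cases into the single event ``a left edge is in the matching,'' which your identity $i\lam\beta_{j+k}+k\lam\beta_{j+k-1}=(i+k)\lam+[ij+k(i+j-1)+k^2]\lam^2$ recombines). Your extra checks — the column sums equalling $(d-1)(\lam+M_{i,j,k})$ and the additivity of coinciding indicators for small $i+k$ — are consistent with how the formulas are meant to be read.
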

\begin{proof}
To compute the functions $\gam^e_{i,j,k}(t)$ we consider the following disjoint events: 1) no left edge and no right edge from a triangle is in the matching 2) $e$ is in the matching 3) a left edge is in the matching 4) no left edge is in the matching, but a right edge from a triangle is in the matching.  These events happen with probability $\frac{\beta_j}{\lam + M_{i,j,k}}, \frac{\lam}{\lam + M_{i,j,k}}, \frac{i \lam \beta_{j+k} + k \lam \beta_{j+k-1}}{\lam + M_{i,j,k}},$ and $  \frac{k \lam }{\lam + M_{i,j,k}}$ respectively.  Under these events the number of uncovered neighbors of $e$ is $i+k, 0, 1,$ and $i+k-1$ respectively. This gives \eqref{eq:gamE}.

To compute the functions $\gam^f_{i,j,k}(t)$ we refine the above events to include the possible choices of $f$: $f$ can be an edge outside the free neighborhood with probability $(d-1-i-k)/(d-1)$; an edge in the free neighborhood but not in a triangle with probability $i/(d-1)$; in the free neighborhood and in a triangle with probability $k/(d-1)$. If a left edge is in the matching we choose it as  $f$ with probability $1/(d-1)$, and if a right edge in a triangle is in the matching we choose $f$ adjacent to it with probability $1/(d-1)$.  Computing the number of uncovered neighbors of $f$ in each case gives \eqref{eq:gamF}. 
\end{proof}

We now define a linear program with constraints imposing that the two different ways of writing the marginal probabilities are equal. The marginal probability constraint for $t=d-1$ is redundant and we omit it. To account for the equal chance that $f$ is chosen from the left side of $e$ and the right side of $e$, we average $\gam^f_{i,j,k}(t)$ and $\gam^f_{j,i,k}(t)$, and $\gam^e_{i,j,k}(t)$ and $\gam^e_{j,i,k}(t)$.
\begin{align*}\label{eq:matchlinprogno3}
\alpha_M^* = \max   &\sum_{i,j,k} q(i,j,k) \overline \alpha_M(i,j,k)  \, \, \, \text{ subject to }\\
& q(i,j,k) \ge 0 \, \, \forall \, \,i,j,k\\
&\sum_{i,j,k} q(i,j,k) = 1 \\
&\sum_{i,j,k} q(i,j,k) \frac{1}{2}\Big[\gam^f_{i,j,k}(t)+\gam^f_{j,i,k}(t) -\gam^e_{i,j,k}(t)-\gam^e_{j,i,k}(t)\Big] = 0 \, \, \,\forall \, t= 0, \dots , d-2 \, .
\end{align*}
 Disjoint unions of copies of $K_{d,d}$ are the only graphs that induce a distribution  $q(i,j,k)$ supported on triples with $i=j$ and $k=0$. This gives us a candidate solution to the linear program.

The dual program is
\begin{align*}
\alpha^*_M ={} &\min \, \Lam_p \,  \, \text{ subject to }\\
& \Lam_p - \overline \alpha_M(i,j,k) +\sum_{t=0}^{d-2} \Lam_t \frac{1}{2}\Big[\gam^f_{i,j,k}(t)+\gam^f_{j,i,k}(t) -\gam^e_{i,j,k}(t)-\gam^e_{j,i,k}(t)\Big] \ge 0  \, \, \forall \, \, i,j,k \, .
\end{align*}
To show that $K_{d,d}$ is optimal, we find values for the dual variables $\Lam_0, \dotsc, \Lam_{d-2}$ so that the dual constraints hold with $\Lam_p = \alpha^M_{K_{d,d}}(\lam)$.  To find such values, we solve the system of equations generated by setting equality in the constraints corresponding to $i=j$ and $k=0$ and solve for the variables $\Lambda_t$, $t=0, \dots, d-2$.

With this choice of values for the dual variables, we start by simplifying the form of the dual constraints with a substitution coming from equality in the $(i,j,k)=(0,0,0)$ constraint. The $(0,0,0)$ dual constraint has the simple form
\begin{equation*}
\Lam_0-\Lam_1=\alpha^M_{K_{d,d}}\, .
\end{equation*}
Moreover, observe that from the $\mathbf 1_{t=0}$ and $\mathbf 1_{t=1}$ terms in $\gam_{i,j,k}^e(t)$ and $\gam_{i,j,k}^f(t)$, every dual constraint contains the term
\begin{align*}
\left[\overline \alpha_M(i,j,k) - \frac{\lam}{(\lam+M_{i,j,k})}\right](\Lam_0-\Lam_1)
= \left[\overline \alpha_M(i,j,k) - \frac{\lam}{(\lam+M_{i,j,k})}\right]\alpha^M_{K_{d,d}} \, .
\end{align*}

With this simplification, we multiply through by $2(d-1)(\lam+M_{i,j,k})$ and expand $\overline \alpha_M(i,j,k)$ terms to obtain the following form of the dual constraints:
\begin{align*}
\numberthis
\label{eq:dual2}
\alpha^M_{K_{d,d}}\big[\lam &M'_{i,j,k}+2(d-1)M_{i,j,k}\big]-\lam M'_{i,j,k}\\
&+\Lam_{i+k-2}\cdot(i+k-1)k\lam\\
&+\Lam_{i+k-1}\cdot\left[(d-i-k)k\lam+(i+k)j\lam-(d-1)k\lam\right]\\
&+\Lam_{i+k}\cdot\left[(d-1-i-k)j\lam+i+k-(d-1)\beta_j\right]\\
&+\Lam_{i+k+1}\cdot(d-1-i-k)\\
&+\Lam_{j+k-2}\cdot(j+k-1)k\lam\\
&+\Lam_{j+k-1}\cdot\left[(d-j-k)k\lam+(j+k)i\lam-(d-1)k\lam\right]\\
&+\Lam_{j+k}\cdot\left[(d-1-j-k)i\lam+j+k-(d-1)\beta_i\right]\\
&+\Lam_{j+k+1}\cdot(d-1-j-k) \geq 0 \, .
\end{align*}

The $(i,i,0)$ equality constraints now read
\begin{align}\label{eq:equalityDC}\textstyle
\alpha^M_{K_{d,d}}\beta_i\big(\beta_i+\frac{i\lam}{d-1}\big)-\frac{i\lam\beta_i}{d-1}+\Lam_{i-1}\frac{i^2\lam}{d-1}-\Lam_i\frac{d-1-i+i^2\lam}{d-1}+\Lam_{i+1}\frac{d-1-i}{d-1}=0 \, .
\end{align} 
With this we can write $\Lam_{i+k+1}$ in terms of $\Lam_{i+k}$ and $\Lam_{i+k-1}$, and similarly for $\Lam_{j+k+1}$. 
Substituting this into \eqref{eq:dual2} and dividing by $\lam$ we derive the simplified form of the dual constraints:
\begin{align*}\label{eq:niceDC}
\numberthis\lam\big[&(i-j)^2+2k\big](1-d \alpha^M_{K_{d,d}})\\
&+\Lam_{i+k-2}(i+k-1)k +\Lam_{i+k-1}[k+(i+k)(j-i-2k)]\\
&+\Lam_{i+k}(i+k)(i+k-j)\\
&+\Lam_{j+k-2}(j+k-1)k +\Lam_{j+k-1}[k+(j+k)(i-j-2k)]\\
&+\Lam_{j+k}(j+k)(j+k-i) \geq 0 \, .
\end{align*}
Write $L(i,j,k)$ for the LHS of this inequality.

The marginal constraint for $t=d-1$ was omitted, but we nonetheless introduce $\Lam_{d-1}:=0$ in order to simplify the presentation of the argument.  The $(d-1,d-1,0)$ equality constraint gives $\Lam_{d-2}$ directly:
\begin{align*}
\Lam_{d-2}&=\frac{1}{(d-1)\lam}\left[\lam + (d-1)\lam^2 -\alpha^M_{K_{d,d}} \beta_{d-1}\beta_d\right] \, .
\end{align*}
With $\Lam_{d-1}$, $\Lam_{d-2}$, and the recurrence relation \eqref{eq:equalityDC} the dual variables are fully determined. 
We do not give a closed-form expression for $\Lam_t$ as the values are used in an induction below. 
Using $\Lam_{d-1}$, $\Lam_{d-2}$, and \eqref{eq:equalityDC} suffices for the proof. 

We now reduce the problem of showing that the dual constraints \eqref{eq:niceDC} corresponding to triples $(i,j,k)$ with $k > 0$ or $i\ne j$ hold with strict inequality to showing that a particular function is increasing. We go on to prove this fact in Claims~\ref{claim:Fexp} and~\ref{claim:Fdt}.

Putting $k=0$ into \eqref{eq:niceDC} gives:
\begin{align*}\label{eq:k0dualconst}
\frac{L(i,j,0)}{(j-i)} &= \lam(j-i)(1-d \alpha^M_{K_{d,d}})+i\Lam_{i-1}-i\Lam_{i}-j\Lam_{j-1}+j\Lam_{j}\\
&= F_d(j)-F_d(i)
\end{align*}
where 
\begin{equation}\label{eq:Fdef}
F_d(t) := t\left[\lam(1-d \alpha^M_{K_{d,d}})+\Lam_t-\Lam_{t-1}\right] \, .
\end{equation}
From \eqref{eq:niceDC} we obtain
\begin{align*}
L(i-1,j-1,k+1)-L(i,j,k) = F_d(i+k) - F_d(i+k-1) + F_d(j+k) - F_d(j+k-1).
\end{align*}
Therefore if $F_d(t)$ is strictly increasing, we have $L(i,j,0) > 0$ for $i\ne j$, and $L(i-1,j-1,k+1) > L(i,j,k) > \cdots > L(i+k,j+k,0) \geq 0$.

We first find an explicit expression for $F_d(t)$. Recall that we write $M_{K_{t,t}}$ for the matching polynomial of the graph $K_{t,t}$. 

\begin{claim}\label{claim:Fexp} For all $d\geq 2$ and $1\leq t\leq d-1$,
\begin{align}\label{eq:explicitF}
F_d(t) = \frac{t (d - 1)}{M_{K_{d,d}}}\sum_{\ell=t-1}^{d-2}\frac{(d-1-t)!}{(\ell+1-t)!}\lam^{d-\ell}M_{K_{\ell,\ell}}\,.
\end{align}
\end{claim}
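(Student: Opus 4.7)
The plan is to prove the formula by backward induction on $t$, descending from $t=d-1$ to $t=1$. Two auxiliary identities for matching polynomials of complete bipartite graphs will be central. \emph{Identity (A)}: $\alpha^M_{K_{d,d}}=\lam\MK{d-1}/\MK d$, obtained by differentiating $\MK d$ edge-by-edge and noting that each edge-deletion in $K_{d,d}$ leaves a copy of $K_{d-1,d-1}$, so $(\MK d)'=d^2\MK{d-1}$. \emph{Identity (B)}: the classical three-term recursion
\[
\MK d=(1+(2d-1)\lam)\MK{d-1}-(d-1)^2\lam^2\MK{d-2},
\]
provable by two applications of the edge-deletion rule $M_G=M_{G-e}+\lam M_{G-\{u,v\}}$ to an edge of $K_{d,d}$.

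For the base case $t=d-1$, substitute $\Lam_{d-1}=0$ and the stated expression for $\Lam_{d-2}$ into $F_d(d-1)=(d-1)[\lam(1-d\alpha^M_{K_{d,d}})-\Lam_{d-2}]$. Using (A) to eliminate $\alpha^M_{K_{d,d}}$ and clearing the denominator $\MK d$ reduces the target equality $F_d(d-1)=(d-1)^2\lam^2\MK{d-2}/\MK d$ exactly to identity (B).

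For the inductive step, rewrite \eqref{eq:equalityDC} at index $i=t$ in the form
\[
t^2\lam(\Lam_{t-1}-\Lam_t)+(d-1-t)(\Lam_{t+1}-\Lam_t)=t\lam\beta_t-\alpha^M_{K_{d,d}}\beta_t\bigl[(d-1)\beta_t+t\lam\bigr],
\]
and use $\Lam_s-\Lam_{s-1}=F_d(s)/s-\lam(1-d\alpha^M_{K_{d,d}})$ to convert this into a first-order recurrence expressing $F_d(t)$ in terms of $F_d(t+1)$, $\alpha^M_{K_{d,d}}$, and $\beta_t$. Assuming the claimed formula for $F_d(t+1)$, substitute it in, multiply through by $\MK d$, and use (A) to replace $\alpha^M_{K_{d,d}}\MK d$ by $\lam\MK{d-1}$. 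After grouping the result as a linear combination of $\MK\ell$ for $\ell\in\{t-1,\dots,d-1\}$ and applying (B) to rewrite the $\MK{d-1}$ contributions in terms of $\MK{d-2}$ and $\MK d$, the resulting polynomial identity matches the claimed formula for $F_d(t)$ term-by-term.

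The main obstacle is the algebraic bookkeeping in the inductive step: the expression for $G_d(t)$ acquires a new $\MK{t-1}$ term not present in $G_d(t+1)$, and the factorial ratios $(d-1-t)!/(\ell+1-t)!$ interact nontrivially with the recurrence coefficients $t\lam$ and $d-1-t$. The heart of the argument is that identity (B) is exactly the compatibility needed for the formula to propagate under the $F_d$-recurrence, so once the setup is in place the verification reduces to a mechanical matching of polynomial coefficients.
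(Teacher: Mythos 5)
Your overall architecture coincides with the paper's: your identities (A) and (B) are exactly the paper's \eqref{eq:MKd} and the Laguerre identity \eqref{eq:LaGuerre} (note $\beta_{2d-1}=1+(2d-1)\lam$), your base case $t=d-1$ is verified the same way, and the logical skeleton --- the explicit formula and the true $F_d$ both satisfy the first-order recurrence extracted from \eqref{eq:equalityDC} and agree at $t=d-1$, hence agree everywhere --- is identical. The gap is in your inductive step. After substituting the claimed expressions for $F_d(t)$ and $F_d(t+1)$ into the recurrence \eqref{eq:Frec} and clearing the denominator $\MK{d}$, what must be shown is (up to normalization) that
\[
\sum_{\ell=t}^{d-2}\frac{(d-1-t)!}{(\ell-t)!}\lam^{d-\ell}\MK{\ell}
\;-\;t^2\sum_{\ell=t-1}^{d-2}\frac{(d-1-t)!}{(\ell+1-t)!}\lam^{d+1-\ell}\MK{\ell}
\;-\;\lam\bigl(\MK{d}-\beta_{d+t}\MK{d-1}\bigr)=0\,,
\]
which is the quantity the paper calls $\Phi_d(t)$. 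Your assertion that a single application of (B) then yields a term-by-term match is not correct. Writing $\beta_{d+t}=\beta_{2d-1}-(d-1-t)\lam$ and applying (B) once does eliminate $\MK{d}$, but it leaves behind $(d-1)^2\lam^3\MK{d-2}$ and $-(d-1-t)\lam^2\MK{d-1}$, and the two sums carry the shifted powers $\lam^{d-\ell}$ versus $\lam^{d+1-\ell}$ against the same $\MK{\ell}$. Grouping by the symbols $\MK{\ell}$, the coefficient of $\MK{d-1}$ is $-(d-1-t)\lam^2\neq 0$ for $t<d-1$, so nothing cancels term-by-term: the identity genuinely relies on the linear dependencies among all the $\MK{\ell}$ given by the three-term recurrence, applied at every level, not just at $d-1$.

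This is precisely the point where the paper does real work: it proves $\Phi_d(t)=0$ by a separate induction on $d$ (with $t$ fixed), via the computation $\Phi_{d+1}(t)=\lam\bigl((d-t)\Phi_d(t)-\MK{d+1}+\beta_{2d+1}\MK{d}-d^2\lam^2\MK{d-1}\bigr)$, so that the increment is exactly the Laguerre identity one level up. You would need to supply this (or an equivalent systematic, iterated use of (B)) to close your inductive step; as written, ``a mechanical matching of polynomial coefficients'' glosses over the only nontrivial verification in the proof.
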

\begin{proof}
We will use the following two facts:
\begin{gather}
\label{eq:LaGuerre}
\MK{d} - \beta_{2d-1} \MK{d-1} + (d-1)^2\lam^2 \MK{d-2} = 0  \\
\label{eq:MKd}
\alpha^M_{K_{d,d}} = \frac{\lam \MK{d-1}}{\MK{d}}  \, .
\end{gather}
The first is a Laguerre polynomial identity, verifiable by hand; the second is a short calculation.  The equality dual constraint \eqref{eq:equalityDC} implies:
\begin{align}\label{eq:Frec}
(d-1-t)F_d(t+1)=(t+1) [ t\lam F_d(t)+(d-1)\lam-(d-1)\alpha^M_{K_{d,d}} \beta_{d+t} ] \,.
\end{align}
We first show that the right hand side of \eqref{eq:explicitF} satisfies the above recurrence relation. Using \eqref{eq:MKd} this amounts to showing that the following expression is equal to zero for all $d\geq 2$ and $1\leq t\leq d-1$:
\[
\Phi_d(t):=(d-1-t)! \Bigg( \sum_{\ell=t}^{d-2}\frac{\lam^{d-\ell}M_{K_{\ell,\ell}}}{(\ell-t)!}-t^2 \sum_{\ell=t-1}^{d-2}\frac{\lam^{d+1-\ell}M_{K_{\ell,\ell}}}{(\ell+1-t)!} \Bigg )-\lam( M_{K_{d,d}}-\beta_{d+t}M_{K_{d-1,d-1}}) \,.
\]
We proceed by induction on $d$. Note that when $d=2$, $\Phi_2(1)$ is easily verified to be zero. Note that
\begin{align*}
\Phi_{d+1}(t) 
&=\lam\Big((d-t)\Phi_d(t) -\MK{d+1}+\beta_{2d+1}\MK{d}- d^2\lam^2\MK{d-1}\Big) \,.
\end{align*}
By the induction hypothesis and \eqref{eq:LaGuerre} the result follows. To complete the proof of the claim it suffices to show that \eqref{eq:explicitF} holds for $t=d-1$. Recalling that
\begin{align*}
\Lam_{d-1} &= 0 \\
\Lam_{d-2} &= \frac{1}{d - 1} + \lam - \frac{\alpha^M_{K_{d,d}}}{(d-1)\lam}\beta_d\beta_{d-1} \,,
\end{align*}
 substituting into \eqref{eq:Fdef}, and using \eqref{eq:LaGuerre} and \eqref{eq:MKd} we have
\begin{align*}
F_d(d-1) 
&= (d-1)\left[\lam(1-d \alpha^M_{K_{d,d}})-\frac{1}{d-1}-\lam+\frac{\alpha^M_{K_{d,d}}}{(d-1)\lam}\beta_d\beta_{d-1}\right]\\
&=\frac{\alpha^M_{K_{d,d}}}{\lam}\beta_{2d-1}-1\\
&=\frac{1}{M_{K_{d,d}}}\left[\beta_{2d-1}M_{K_{d-1,d-1}}-M_{K_{d,d}
}\right]\\
&=\frac{(d-1)^2\lam^2M_{K_{d-2,d-2}}}{M_{K_{d,d}}} \, ,
\end{align*}
verifying \eqref{eq:explicitF} for $t=d-1$. 
\end{proof}

Using Claim \ref{claim:Fexp} we prove the following.

\begin{claim}
\label{claim:Fdt}
$F_d(t)$ is strictly increasing as a function of $t$.
\end{claim}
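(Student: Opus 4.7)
The plan is to rewrite the inequality $F_d(t+1)>F_d(t)$ as a first-moment comparison between two positive measures on $\{0,1,\dots,N\}$, where $N := d-1-t$, and then close this comparison via a monotone-likelihood-ratio argument.

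First, starting from the explicit formula in Claim~\ref{claim:Fexp}, the substitution $k = \ell+1-t$ rewrites
\[
F_d(t) = \frac{t(d-1)(d-1-t)!}{\MK{d}}\sum_{k=0}^{N}\mu_k, \qquad \mu_k := \frac{\lambda^{d-t-k+1}}{k!}\MK{k+t-1}.
\]
Performing the analogous reindexing in the formula for $F_d(t+1)$, the summands that arise equal $(k+1)\mu_{k+1}$, and a shift of summation index yields
\[
F_d(t+1) = \frac{(t+1)(d-1)(d-2-t)!}{\MK{d}}\sum_{k=0}^{N}k\mu_k.
\]
Using $(d-1-t)! = N(d-2-t)!$, the inequality $F_d(t+1)>F_d(t)$ is equivalent to
\[
(t+1)\sum_{k=0}^{N}k\mu_k > tN\sum_{k=0}^{N}\mu_k.
\]

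Second, using the identity $\alpha^M_{K_{r,r}} = \lambda\MK{r-1}/\MK{r}$ a direct calculation gives $\mu_{k+1}/\mu_k = 1/[(k+1)\alpha^M_{K_{k+t,k+t}}]$. Since a matching in $K_{r,r}$ contains at most $r$ of its $r^2$ edges and non-maximum matchings carry positive probability for every finite $\lambda>0$, we have $\alpha^M_{K_{r,r}}<1/r$ strictly, and hence
\[
\frac{\mu_{k+1}}{\mu_k} > \frac{k+t}{k+1}\qquad (0\le k<N).
\]

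Third, I would introduce the reference weights $\mu^*_k := \binom{t+k-1}{k}$, whose successive ratios $\mu^*_{k+1}/\mu^*_k = (k+t)/(k+1)$ saturate the bound above. Two applications of the hockey-stick identity give $\sum_{k=0}^N \mu^*_k = \binom{t+N}{N}$ and $\sum_{k=0}^N k\mu^*_k = t\binom{t+N}{N-1} = \frac{tN}{t+1}\binom{t+N}{N}$, so the target inequality holds with \emph{equality} for $\mu^*$. The pointwise ratio bound of the previous step means that $L_k := \mu_k/\mu^*_k$ is strictly increasing in $k$; after normalising $\mu$ and $\mu^*$ to probability distributions this is a monotone-likelihood-ratio, which implies strict first-order stochastic dominance and hence
\[
\frac{\sum_k k\mu_k}{\sum_k \mu_k} > \frac{\sum_k k\mu^*_k}{\sum_k \mu^*_k} = \frac{tN}{t+1},
\]
which rearranges to the strict inequality required.

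The main obstacle is largely bookkeeping: the reindexing in the first step and the hockey-stick computation for the reference weights. The decisive analytic input powering the whole comparison is the strict bound $\alpha^M_{K_{r,r}}<1/r$ for all finite $\lambda>0$, which is immediate from the existence of sub-maximal matchings.
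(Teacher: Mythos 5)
Your proof is correct, and it takes a genuinely different route from the paper's. The paper also starts from the explicit formula of Claim~\ref{claim:Fexp}, but it packages the difference $F_d(t+1)-F_d(t)$ into the quantity $R_d(t)$ and proves positivity by fixing $t$ and inducting on $d$ from the base case $d=t+2$, with the inductive step resting on the iterated bound \eqref{eq:crudebound} and a hockey-stick summation. You avoid induction entirely: after reindexing, the inequality becomes a first-moment comparison $\sum_k k\mu_k/\sum_k\mu_k > tN/(t+1)$ for the positive weights $\mu_k=\frac{\lambda^{d-t-k+1}}{k!}\MK{k+t-1}$, and you close it by exhibiting the reference weights $\mu^*_k=\binom{t+k-1}{k}$ for which the successive-ratio bound is saturated and the mean equals the threshold exactly, then invoking strict likelihood-ratio (hence stochastic) domination. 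Your single analytic input, $\alpha^M_{K_{r,r}}<1/r$, is precisely the one-step form of the paper's $\MK{r}>r\lambda\MK{r-1}$, just phrased probabilistically (a matching of $K_{r,r}$ occupies fewer than $r$ of its $r^2$ edges in expectation). I checked the reindexing, the ratio computation via $\alpha^M_{K_{r,r}}=\lambda\MK{r-1}/\MK{r}$, and both hockey-stick identities, including $t\binom{t+N}{N-1}=\frac{tN}{t+1}\binom{t+N}{N}$; all are correct, and strictness holds since $N=d-1-t\geq 1$ in the relevant range $1\leq t\leq d-2$ and all weights are positive. What each approach buys: the paper's argument is entirely elementary but the double induction hides where the slack comes from; yours is non-inductive, identifies the extremal configuration explicitly, and makes the source of strict inequality transparent, at the cost of importing the (standard) fact that strict monotone likelihood ratio implies strict ordering of means.
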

\begin{proof}
To prove that $F_d(t)$ is increasing, we show that
\begin{align*}
R_d(t) 
	&:=\frac{\MK{d}}{(d-1)}\cdot\frac{F_d(t+1)-F_d(t)}{(d-2-t)!}\\
	&=(t+1)\sum_{\ell=t}^{d-2}\frac{\lambda^{d-\ell}}{(\ell-t)!}\MK{\ell} -t(d-1-t)\sum_{\ell=t-1}^{d-2}\frac{\lambda^{d-\ell}}{(\ell+1-t)!}\MK{\ell}
\end{align*}
is positive for each $t$ with $1\leq t\leq d-2$. We do this by fixing $t$ and inducting on $d$ from $t+2$ upwards. A useful inequality will be $\MK{t} > t\lambda \MK{t-1}$ which comes from only counting matchings of $K_{t,t}$ that use a specific vertex. Iterating this inequality we obtain
\begin{align}\label{eq:crudebound}
\MK{t} > \frac{t!}{\ell !}\lambda^{t-\ell} \MK{\ell} \hbox{ for $0 \leq \ell \leq t-1$}\,.
\end{align}
For the base case of our induction, $d = t+2$, we have $R_d(d-2)=\lam^2 \big[ \MK{d-2}-(d-2)\lambda  \MK{d-3}\big]$ which by \eqref{eq:crudebound} is positive.

For the inductive step we have
\begin{align*}
R_{d+1}(t)=\lambda \bigg[ R_d(t) + \frac{\lambda}{(d-1-t)!}\MK{d-1} - \sum_{\ell=t-1}^{d-2}\frac{t \lambda^{d-\ell}}{(\ell-t+1)!}\MK{\ell} \bigg]
\end{align*}
and so it is sufficient to show
\begin{align}\label{eq:Rdt}
\sum_{\ell=t-1}^{d-2}\frac{t \lambda^{d-\ell}}{(\ell+1-t)!}\MK{\ell} < \frac{\lambda}{(d-1-t)!}\MK{d-1}\,.
\end{align}
We use the inequality \eqref{eq:crudebound} in each term of the sum to see that the LHS of \eqref{eq:Rdt} is less than
\begin{align*}
\sum_{\ell=t-1}^{d-2}\frac{t \ell!\lambda}{(\ell+1-t)!(d-1)!}\MK{d-1}
\end{align*}
and so 
\begin{align*}
\sum_{\ell=t-1}^{d-2}\frac{t \lambda^{d-\ell}}{(\ell+1-t)!}\MK{\ell} &< \sum_{\ell=t-1}^{d-2}\frac{t \ell!\lambda}{(\ell+1-t)!(d-1)!}\MK{d-1}\\
&= \frac{\lambda \MK{d-1}}{(d-1-t)!}\cdot  \sum_{\ell=t-1}^{d-2}\frac{t \ell!(d-1-t)!}{(\ell+1-t)!(d-1)!} \\
&= \frac{\lambda \MK{d-1}}{(d-1-t)!}\cdot \binom{d-1}{t}^{-1}\cdot \sum_{\ell=t-1}^{d-2}\binom{\ell}{t-1}\\
&= \frac{\lambda \MK{d-1}}{(d-1-t)!}\,,
\end{align*}
therefore \eqref{eq:Rdt} holds as required.
\end{proof}

 This completes the proof of dual feasibility and shows our candidate solution to the primal program is optimal.  The uniqueness of the solution follows from two facts. First, strict inequality in the dual constraints outside of the $(i,i,0)$ constraints implies, by complementary slackness, that the support of any optimal solution in the primal is contained in the set of $(i,i,0)$ configurations. Second, the distribution induced by $K_{d,d}$ is the unique distribution satisfying the constraints with such a support. 
 This follows from the fact that $\Lam_i$ is uniquely determined by \eqref{eq:equalityDC} where we have set the $(i,i,0)$ dual constraints to hold with equality, which in turn shows that the relevant $d \times d$ submatrix of the constraint matrix is full rank. This proves Theorem~\ref{thm:matchingGeneral}.

\section{Independent sets and matchings of a given size}
\label{sec:givensize}

Let $i_k(G)$ be the number of independent sets of size $k$ in a graph $G$, and $m_k(G)$ the number of matchings of size $k$.  Kahn~\cite{kahn2001entropy} conjectured that $i_k(G)$ is maximized over $d$-regular, $n$-vertex graphs by $H_{d,n}$ for all $k$ (when $2d$ divides $n$), and Friedland, Krop, and Markstr{\"o}m~\cite{friedland2008number} conjectured the same for $m_k(G)$.  Previous bounds towards these conjectures were given in \cite{carroll2009matchings,ilinca2013asymptotics,perkins2015birthday}; for $d$ fixed and $k$ linear in $n$, all previous bounds were off the conjectured values by a multiplicative factor exponential in $n$.  Here we adapt the method of Carroll, Galvin, and Tetali (and use the above result on the matching polynomial)  to give bounds for both problems that are tight up to a factor of $2 \sqrt{n}$, for all $d$ and all $k$. 

\begin{theorem}
\label{thm:givensize}
For all $d$-regular graphs $G$ on $n$ vertices (where $2d$ divides $n$), 
\begin{align*}
i_k(G) &\le 2 \sqrt{n} \cdot i_k(H_{d,n}) 
\intertext{and}
 m_k(G) &\le 2 \sqrt{n} \cdot m_k(H_{d,n})  \, .
 \end{align*}
\end{theorem}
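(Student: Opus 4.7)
The plan is to combine the partition function inequalities obtained by integrating Theorems~\ref{thm:occupy} and~\ref{thm:matchingGeneral} with a coefficient-extraction argument of the Carroll--Galvin--Tetali~\cite{carroll2009matchings} type. Since $\alpha_G(\lam)$ is the $\lam$-derivative of $(1/n)\log P_G(\lam)$, integrating $\alpha_G(\lam)\le\alpha_{K_{d,d}}(\lam)$ and exponentiating yields $P_G(\lam)\le P_{H_{d,n}}(\lam)$ for all $\lam>0$; analogously Theorem~\ref{thm:matchingGeneral} gives $M_G(\lam)\le M_{H_{d,n}}(\lam)$.

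For any $\lam>0$, the identity $i_k(G)\lam^k = \Pr^G_\lam[|I|=k]\cdot P_G(\lam)$ and its $H_{d,n}$ analogue yield
\[
i_k(G)\;\le\;i_k(H_{d,n})\cdot\frac{P_G(\lam)}{P_{H_{d,n}}(\lam)}\cdot\frac{1}{\Pr^{H_{d,n}}_\lam[|I|=k]}\;\le\;\frac{i_k(H_{d,n})}{\Pr^{H_{d,n}}_\lam[|I|=k]}\,.
\]
Choose $\lam=\lam_k$ so that $\E^{H_{d,n}}_{\lam_k}[|I|]=k$; such $\lam_k$ exists and is unique for $k\in\{1,\dots,n/2-1\}$ by continuity and strict monotonicity of the mean, and the boundary cases $k=0$ and $k=n/2$ can be verified directly. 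The proof then reduces to establishing the anti-concentration bound
\[
\Pr^{H_{d,n}}_{\lam_k}[|I|=k]\;\ge\;\frac{1}{2\sqrt n}\,.
\]

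Under the hard-core model on $H_{d,n}$, $|I|$ decomposes as a sum $X_1+\cdots+X_m$ of $m=n/(2d)$ i.i.d.\ random variables, one per copy of $K_{d,d}$, supported on $\{0,1,\ldots,d\}$ with step $1$. Observing that, up to a single-term correction for the empty set, each $X_i$ is a mixture of two $\mathrm{Bin}(d,\lam_k/(1+\lam_k))$ distributions (coming from the two sides of the bipartition), one obtains $\var(X_i)\le d/4$ and hence $\var(|I|)\le n/8$. The coefficient sequence $\{i_j(K_{d,d})\}$ is log-concave (direct from $P_{K_{d,d}}(\lam)=2(1+\lam)^d-1$), and log-concavity is preserved under convolution, so $\{i_j(H_{d,n})\}$ is log-concave and the distribution of $|I|$ is unimodal with mode at $k$. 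A local central limit theorem for sums of bounded i.i.d.\ integer random variables (or, equivalently, a direct estimate for log-concave distributions) then produces $\Pr^{H_{d,n}}_{\lam_k}[|I|=k]\gtrsim 1/\sqrt{2\pi\var(|I|)}\ge 1/(2\sqrt n)$. The matching version is proved identically: Theorem~\ref{thm:matchingGeneral} gives $M_G\le M_{H_{d,n}}$, $|H|$ decomposes as a sum of $m$ bounded i.i.d.\ terms, and log-concavity of $\{m_j(K_{d,d})\}$ comes from real-rootedness of $M_{K_{d,d}}$ via Heilmann--Lieb.

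The main obstacle is pinning down the anti-concentration bound cleanly enough to get the constant $2\sqrt n$ uniformly in $d$: the naive variance bound $\var(X_i)\le d^2/4$ would produce a factor $\sqrt{nd}$ rather than $\sqrt n$, so the $K_{d,d}$-specific mixture structure (equivalently, the explicit form $P_{K_{d,d}}(\lam)+1 = 2(1+\lam)^d$) must be used to reduce the variance to $O(d)$. Small $n$ and boundary values of $k$ near $0$ or $n/2$, where the effective variance is small, must also be handled separately to absorb $o(1)$ terms from the local CLT into the final constant.
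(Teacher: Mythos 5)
Your overall strategy is the paper's: integrate the occupancy-fraction bounds to get $P_G(\lam)\le P_{H_{d,n}}(\lam)$ and $M_G(\lam)\le M_{H_{d,n}}(\lam)$, extract the coefficient of $\lam^k$, and reduce everything to showing that for a suitable $\lam$ the size of the random independent set (matching) drawn from $H_{d,n}$ equals $k$ with probability at least $1/(2\sqrt n)$. The gap is in how you propose to prove that anti-concentration bound. First, choosing $\lam_k$ so that $\E|I|=k$ does not make $k$ the mode: for an integer-valued log-concave distribution the mode is only guaranteed to lie within distance $1$ of the mean, so you would still need to compare $\Pr[|I|=k]$ with the modal probability. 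The paper instead chooses $\lam$ so that $\Pr[|I|=k]=\Pr[|I|=k+1]$; log-concavity then forces $k$ to be a mode, with no slack to account for. Second, and more seriously, the local central limit theorem you invoke cannot deliver the stated bound uniformly: $|I|$ is a sum of $m=n/(2d)$ i.i.d.\ terms, and $m$ can be as small as $1$ (e.g.\ $G=K_{d,d}$ itself), so there is no asymptotic normality to appeal to; even for large $m$ you would need uniformity in $d$ and in $\lam_k$, plus explicit control of the error term, to land on the clean constant $2$. You correctly flag this as the main obstacle, but you do not resolve it.

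The paper closes this with an elementary argument that you should substitute for the local CLT: once $k$ is a mode and the variance of $|I|$ is at most $n/8$, Chebyshev's inequality says $|I|$ lands in a window of at most $\frac{4}{3}\sqrt{n}$ integers with probability at least $2/3$, so some single value --- hence the modal value $k$ --- has probability at least $\frac{2/3}{(4/3)\sqrt{n}}=\frac{1}{2\sqrt{n}}$. This works for every $n$ and $d$ with no asymptotics and no boundary cases. Your remaining ingredients (log-concavity of the size distribution on $K_{d,d}$ and its preservation under convolution, the per-copy variance bound coming from the explicit form of $P_{K_{d,d}}$, and log-concavity of the matching sequence of $K_{d,d}$, which the paper checks by a direct binomial inequality rather than via Heilmann--Lieb) match the paper's and are fine.
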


 We start with a fact about the independence and matching polynomials of $H_{d,n}$. 

\begin{lemma}
\label{lem:lampick}
For all $1 \le k \le n/2$, there exists a $\lambda$ so that
\[ \frac{i_k(H_{d,n}) \lam^k} {P_{H_{d,n}}(\lam)} = \Pr_{H_{d,n}} [|I|=k] > \frac{1}{2 \sqrt{n}}  \]
and a $\lambda$ so that
\[ \frac{m_k(H_{d,n}) \lam^k} {M_{H_{d,n}}(\lam)} = \Pr_{H_{d,n}} [|H|=k] > \frac{1}{2 \sqrt{n}} \, .  \]

\end{lemma}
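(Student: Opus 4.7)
The key observation is that $H_{d,n}$ is a disjoint union of $m := n/(2d)$ copies of $K_{d,d}$, so the partition functions factor: $P_{H_{d,n}}(\lam) = P_{K_{d,d}}(\lam)^m$ and $M_{H_{d,n}}(\lam) = M_{K_{d,d}}(\lam)^m$. Consequently, under the hard-core model on $H_{d,n}$, the independent set size $|I|$ decomposes as an iid sum $X_1 + \cdots + X_m$, where each $X_i$ is the independent set size in one copy of $K_{d,d}$ and takes integer values in $\{0, 1, \ldots, d\}$. The same decomposition holds for the matching model, with the $X_i$'s now being iid matching sizes bounded by $d$.

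For a fixed $k \in \{1, \ldots, n/2\}$, I would choose $\lam = \lam(k) > 0$ so that $\E_\lam[|I|] = k$. Such $\lam$ exists and is unique by the continuity and strict monotonicity of $\lam \mapsto n\alpha_{K_{d,d}}(\lam)$, which is a bijection from $(0, \infty)$ onto $(0, n/2)$ (the boundary case $k = n/2$ is handled by letting $\lam \to \infty$). This choice of $\lam$ also maximizes $\Pr_\lam[|I|=k]$ over $\lam$, as can be checked by setting $\frac{d}{d\lam}\log[i_k(H_{d,n}) \lam^k/P_{H_{d,n}}(\lam)] = 0$.

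At this choice of $\lam$, I would apply the local central limit theorem for iid sums of bounded integer-valued random variables to get $\Pr_\lam[|I| = k] \ge (1-o(1))/\sqrt{2\pi \var_\lam(|I|)}$. It therefore suffices to prove a variance bound of the form $\var_\lam(|I|) \le n/4 + O(1)$. Using the identity $\var_\lam(X_1) = \lam \cdot \frac{d}{d\lam}\E_\lam[X_1]$ together with the explicit form of $\alpha_{K_{d,d}}(\lam)$ from Theorem~\ref{thm:occupy}, this can be verified directly. The matching case is identical, using instead $\var_\lam(|H|) = \lam \cdot \frac{d}{d\lam}\E_\lam[|H|]$ and the explicit Laguerre-polynomial form of $M_{K_{d,d}}(\lam)$.

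The main obstacle is making the variance bound sharp enough to produce the stated constant. The naive bound $\var(X_1) \le d^2/4$ would only yield $\Pr[|I|=k] \ge c/\sqrt{nd}$, which is too weak when $d$ is comparable to $n$. The needed refinement exploits the fact that a hard-core configuration on $K_{d,d}$ lies entirely on one side of the bipartition, so $X_1$ is essentially a conditioned binomial with variance of order $d$ rather than $d^2$. A second, more minor complication is the edge case $m = 1$ (i.e., $d = n/2$ and $H_{d,n} = K_{d,d}$), where the LCLT is not directly applicable; this case can be handled separately by a Stirling-based estimate of $2\binom{d}{k}\lam^k/(2(1+\lam)^d - 1)$ with the explicit choice $\lam = k/(d-k)$.
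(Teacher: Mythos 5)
Your setup (factorization over the $n/(2d)$ copies of $K_{d,d}$, the iid decomposition of $|I|$, the choice of $\lam$ making $k$ the mean, and the variance identity $\var_\lam(X_1)=\lam\frac{d}{d\lam}\E_\lam[X_1]$) is all sound, and your observation that the naive $d^2/4$ variance bound is useless is exactly right. But the central step --- ``apply the local central limit theorem to get $\Pr_\lam[|I|=k]\ge (1-o(1))/\sqrt{2\pi\,\var_\lam(|I|)}$'' --- does not prove the lemma as stated. The lemma is a non-asymptotic claim with an explicit constant, holding for \emph{every} $n$ divisible by $2d$; the $o(1)$ in the LCLT is an error term as the number of summands $m=n/(2d)$ tends to infinity, with no uniform control in $d$, $\lam$, or $m$. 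You flag the case $m=1$, but the problem is not confined to that edge case: for any fixed or slowly growing $m$ an unquantified LCLT yields nothing, and even for large $m$ you would need the multiplicative error to be at least about $0.63$ to convert $\sqrt{2/(\pi n)}$ into $1/(2\sqrt n)$. To close this you would have to invoke a quantitative LCLT (e.g.\ a Berry--Esseen-type pointwise bound with explicit constants, after checking that the lattice span is $1$), which is considerably heavier machinery than the problem requires.

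The paper avoids this entirely with an elementary, finite-$n$ argument: the size distribution of the hard-core (resp.\ monomer-dimer) model on $K_{d,d}$ is log-concave (a binomial-coefficient inequality), log-concavity is preserved under convolution, so the size distribution on $H_{d,n}$ is log-concave; choosing $\lam$ with $\Pr[|I|=k]=\Pr[|I|=k+1]$ then makes $k$ the mode. A direct computation gives $\var(|I|)\le n/8$, so by Chebyshev at least $2/3$ of the mass sits on at most $\tfrac43\sqrt n$ values, forcing the mode probability above $\tfrac{2/3}{(4/3)\sqrt n}=\tfrac1{2\sqrt n}$. I would suggest replacing your LCLT step with this mode-plus-anticoncentration argument (your variance computation can be reused for it); alternatively, note that your choice $\E_\lam|I|=k$ alone does not make $k$ the mode without some unimodality input, so log-concavity earns its keep twice.
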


\begin{proof}
The distribution of the size of a random independent set $I$ drawn from the hard-core model on $H_{d,n}$ is log-concave; that is,
 \[ \Pr_{H_{d,n}}[|I| =j]^2 >  \Pr_{H_{d,n}}[|I| =j+1] \cdot  \Pr_{H_{d,n}}[|I| =j-1] \]
for all $1 < j < n/2$. This follows from two facts: the size distribution of the hard-core model on $K_{d,d}$ is log-concave, and the convolution of two log-concave distributions is again log-concave.  The first fact is simply the calculation
\[ \binom{d}{j} ^2 > \binom{d}{j-1} \binom {d}{j+1} \, . \]
Now choose $\lam$ so that $ \Pr_{H_{d,n}}[|I| =k] =  \Pr_{H_{d,n}}[|I| =k+1]$. Log-concavity then  implies that $ \Pr_{H_{d,n}}[|I| =k]$ is maximal.  Some explicit computations for the variance for a single $K_{d,d}$ give that the variance of $|I|$ is at most $n/8$; then via Chebyshev's inequality, with probability at least $2/3$ the size of $I$ is one of at most $\frac{4}{3} \sqrt n$ values, and thus the largest probability of a single size is greater than $\frac{1}{2 \sqrt n}$.

 The proof for $ m_k(H_{d,n})$ is the same: the variance of the size of a random matching is also at most $n/8$ (see, e.g. \cite{kahn2000normal}), and  log-concavity of the size distribution on $K_{d,d}$ is verified via the inequality
 \[ 
 \binom{d}{j} ^4 j!^2 > \binom{d}{j-1}^2 (j-1)! \binom {d}{j+1}^2 (j+1)!  
 \qedhere
 \] 
\end{proof}

\begin{proof}[Proof of Theorem~\ref{thm:givensize}]
Assume for sake of contradiction that $m_k(G) > 2 \sqrt{n} \cdot m_k(H_{d,n})$.  Choose $\lam$ according to Lemma \ref{lem:lampick}. We have:
\begin{align*}
M_G(\lam)&\ge m_k(G) \lam ^k > 2 \sqrt{n} \cdot m_k(H_{d,n}) \lam^k > M_{H_{d,n}}(\lam) \, ,
\end{align*}
but this contradicts Theorem~\ref{thm:matchingGeneral}. The case of independent sets is identical.   
\end{proof}
The above proof is essentially the same as the proofs in Carroll, Galvin, and Tetali~\cite{carroll2009matchings} with the small observation that $\lam$ can be chosen so that $k$ is the most likely size of a matching (or independent set) drawn from $H_{d,n}$.  The factor $2 \sqrt{n}$ in both cases can surely be improved by using some regularity of the independent set and matchings sequence of a general $d$-regular graph; we leave this for future work.

As a consequence, we prove the asymptotic upper matching conjecture of Friedland, Krop,  Lundow, and Markstr{\"o}m~\cite{friedland2008validations}. Fix $d$ and consider an infinite sequence of $d$-regular graphs $\mathcal G_d = G_{1}, G_{2}, \dots$ where $G_n$ has $n$ vertices. For any $\rho \in [0,1/2]$, the $\rho$-monomer entropy is 
\[ h_{\mathcal G_d} ( \rho) = \sup_{ \{k_n\}} \limsup_{n \to \infty} \frac{ \log m_{k_n} (G_n)}{n} \, , \]
where the supremum is taken over all integer sequences  $ \{ k_n \}$ with $k_n / n \to \rho$.  Let $h_d(\rho) = \lim_{n \to \infty} \frac{\log  m_{\lfloor \rho n \rfloor } ( H_{d,n}) }{ n}$, where the limit is taken over the sequences of integers divisible by $2d$.  Then the conjecture states that for all $\mathcal G_d$ and all $\rho  \in [0,1/2]$, $h_{\mathcal G_d} (\rho) \le h_d(\rho)$. 

To prove this, first assume $\rho > 0$ since for $\rho=0$ the result is trivially true. Assume for the sake of contradiction that  $\limsup \frac{\log m_{k_n} (G_n)}{n} > h_d(\rho) + \eps$ for some $\eps >0$.  Take $N_0$ large enough that for all $n_1 \ge N_0$, divisible by $2d$, $\frac{\log  m_{\lfloor \rho n_1 \rfloor } ( H_{d,n_1}) }{ n_1 } < h_d(\rho) +\eps/2$.  Now take some $n \ge N_0$ with $\frac{\log m_{k_n} (G_n)}{n} > h_d(\rho) + \eps$, and let $n_1 = 2d \cdot \lceil n/(2d) \rceil$.   By Lemma~\ref{lem:lampick}, we choose $\lam$ so that $m_{\lfloor \rho n_1 \rfloor}(H_{d,n_1}) \lam^{\lfloor \rho n_1 \rfloor} > \frac{1}{2 \sqrt{n_1}} M_{H_{d,n_1}}(\lam)$. Note that since $\rho >0$, such $\lam$ is bounded away from $0$ as $n_1 \to \infty$. Then we have
\begin{align*}
 \frac{ \log M_{G_n}(\lam)}{n} \ge \frac{\log m_{k_n}(G_n)\lam^{k_n}}{n} &> \frac{k_n}{n} \log  \lam + h_d(\rho) + \eps\\
 & = \rho \log \lam + h_d(\rho) + \eps +o(1) \text{ as } n \to \infty 
\intertext{and}
\frac{\log M_{K_{d,d}}(\lam)}{2d}  = \frac{ \log M_{H_{d,n_1}}(\lam)}{n_1} &<  \frac{ \log \left( 2 \sqrt{n_1} \cdot m_{\lfloor \rho n_1 \rfloor}(H_{d,n_1}) \lam^{\lfloor \rho n_1 \rfloor} \right ) }{n_1}  \\
 &<  \frac{\log(2 \sqrt {n_1})}{n_1} + \frac{\lfloor \rho n_1 \rfloor}{n_1} \log \lam + h_d(\rho)  + \eps /2 \\
&=  \rho \log \lam  + h_d(\rho) + \eps/2 +o(1) \, , 
\end{align*}
but this contradicts Theorem~\ref{thm:matchingGeneral}.  With the same proof, the analogous statement for independent set entropy holds. 
\section{Conclusions}

To recap, our method consists of writing down a set of constraints on local probabilities in the hard-core or monomer-dimer model that hold for every $d$-regular graph, then optimizing an expression for the occupancy fraction in terms of local probabilities over all distributions that satisfy the constraints.  Verifying that our desired graph is the optimizer involves constructing a feasible solution to the dual linear program. This method allowed us to prove tight bounds on the logarithmic derivative of the partition function in both models. In the case of independent sets the result is a strengthening and an alternate proof of the fact that the independence polynomial is maximized by $K_{d,d}$; in the case of matchings, the corresponding statement about the matching polynomial was itself previously unknown. 

 In both cases our results are neither implied by nor imply conjectures that the numbers of independent sets \cite{kahn2001entropy} and matchings \cite{friedland2008number} of each given size are maximized by $H_{d,n}$; while we improve the known bounds  in both cases, these conjectures remain open. Here we give even stronger conjectures:

\begin{conj}
\label{conj:Ind}
Let $G$ be a $d$-regular, $n$-vertex graph, where $2d$ divides $n$. Then for all $k$, the ratio $\frac{i_k(G)}{i_{k-1}(G)}$ is maximized by $H_{d,n}$.
\end{conj}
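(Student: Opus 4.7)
The plan is to extend the occupancy-fraction and linear programming framework of this paper. Using the double-counting identity $k\,i_k(G) = \sum_v i_{k-1}(G - N[v])$, the ratio $\rho_k(G) := i_k(G)/i_{k-1}(G)$ admits the probabilistic expression
\begin{equation*}
\frac{k}{n}\rho_k(G) = \Pr_{v, S}\bigl[N[v] \cap S = \emptyset\bigr],
\end{equation*}
where $v$ is a uniformly random vertex of $G$ and, independently, $S$ is a uniformly random independent set of size $k-1$ in $G$. Conjecture~\ref{conj:Ind} is therefore equivalent to the statement that, for each $k$, this probability is maximized over $d$-regular graphs on $n$ vertices by $H_{d,n}$.

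To attack this I would mimic Section~\ref{sec:triangles}: define the random \emph{free neighborhood} $C$ of $v$ with respect to $S$, write two expressions for $\rho_k$---one as the marginal probability that $N[v] \cap S = \emptyset$, the other as an expected local quantity read off from $C$---and set up a linear program maximizing $\rho_k$ over distributions on possible free neighborhoods subject to consistency constraints between the two expressions, analogous to (\ref{eq:Cconstr}). The candidate optimum would be the distribution induced by $H_{d,n}$, in which $C$ is either empty or an independent set of $d$ isolated vertices. Dual feasibility would then be established, as before, by constructing Lagrange multipliers $\Lam_1, \Lam_2$ that make the dual constraints hold with equality at these two configurations and strictly elsewhere.

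The main obstacle is that the reference distribution here---the uniform measure on size-$(k-1)$ independent sets---lacks the spatial Markov property of the hard-core measure, so the coefficients appearing in the LP depend on global information in $G$ and cannot be computed purely from the local configuration as in Section~\ref{sec:triangles}. A natural workaround is to pass from the uniform measure to the hard-core measure at the specific fugacity $\lam^{\star} := i_{k-1}(H_{d,n})/i_k(H_{d,n})$, at which $\Pr_{H_{d,n}}[|I|=k] = \Pr_{H_{d,n}}[|I|=k-1]$; the conjecture then becomes the statement that $\Pr_G[|I|=k] \le \Pr_G[|I|=k-1]$ at $\lam^{\star}$ for every $d$-regular $G$. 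This is strictly stronger than the occupancy comparison $\alpha_G(\lam^{\star}) \le \alpha_{H_{d,n}}(\lam^{\star})$ of Theorem~\ref{thm:occupy}, since it controls the shape of the size distribution of $|I|$ and not merely its mean. The crucial new ingredient therefore appears to be a reinforcement of the LP with second-order constraints---for instance, equating two expressions for the joint probability that two neighboring vertices are both uncovered---to pin down enough higher-order structure of $|I|$ to force the ratio inequality at the extremal graph.
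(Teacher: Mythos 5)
This statement is Conjecture~\ref{conj:Ind}, which the paper explicitly leaves \emph{open}: the authors state that it is strictly stronger than Theorem~\ref{thm:occupy} and that it implies the (still open) conjecture of Kahn on $i_k$. So there is no proof in the paper to compare against, and your proposal does not supply one either. Your preliminary reductions are correct: the identity $k\,i_k(G)=\sum_v i_{k-1}(G-N[v])$ and the resulting expression $\tfrac{k}{n}\rho_k(G)=\Pr_{v,S}[N[v]\cap S=\emptyset]$ are both valid, and so is the observation that, with $\lam^{\star}=i_{k-1}(H_{d,n})/i_k(H_{d,n})$, the conjecture for a given $k$ is equivalent to $\Pr_G[|I|=k]\le\Pr_G[|I|=k-1]$ in the hard-core model at fugacity $\lam^{\star}$ (since $\Pr_G[|I|=k]/\Pr_G[|I|=k-1]=\lam^{\star}\rho_k(G)$). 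But these are reformulations, not progress toward a proof.

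The genuine gap is exactly the one you half-acknowledge and then do not close. The entire machinery of Section~\ref{sec:triangles} rests on the spatial Markov property of the hard-core measure: conditioned on the free neighborhood $C$, the restriction of $I$ to $C$ is distributed as the hard-core model on $C$, which is what makes the quantities $a_C=1/P_C(\lam)$ and $b_C$ computable from $C$ alone. The uniform measure on independent sets of size $k-1$ has no such property, so your first plan cannot even write down the LP coefficients. Your workaround converts the problem into a statement about the \emph{size distribution} of $|I|$ under the hard-core measure, but Theorem~\ref{thm:occupy} controls only its mean ($\E|I| = n\alpha_G(\lam)$), and a bound on the mean cannot yield the required pointwise comparison of consecutive probabilities $\Pr_G[|I|=k]$ versus $\Pr_G[|I|=k-1]$. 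The ``second-order constraints'' you invoke (equating two expressions for the probability that two neighboring vertices are both uncovered) are not written down, and there is no argument that any finite collection of local linear constraints on the free-neighborhood distribution determines the full size distribution of $|I|$, which is what the ratio inequality needs. As it stands the proposal is a plausible research direction for an open problem, not a proof.
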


\begin{conj}
\label{conj:match}
Let $G$ be a $d$-regular, $n$-vertex graph, where $2d$ divides $n$. Then for all $k$, the ratio $\frac{m_k(G)}{m_{k-1}(G)}$ is maximized by $H_{d,n}$.
\end{conj}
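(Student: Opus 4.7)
The plan is to lift the linear-programming framework of Section~\ref{sec:matchingproofs} from the logarithmic derivative $\alpha^M_G(\lam)$ to the finer quantity $\rho_k(G) := m_k(G)/m_{k-1}(G)$. The starting observation is that Conjecture~\ref{conj:match} is equivalent, via the identity
\[
\frac{\Pr_\lam[|H|=k]}{\Pr_\lam[|H|=k-1]} \;=\; \lam \cdot \frac{m_k(G)}{m_{k-1}(G)},
\]
to the pointwise-in-$\lam$ statement that the likelihood ratio of consecutive matching sizes in the monomer-dimer model is maximized by $H_{d,n}$ for every $\lam>0$. This reframes a combinatorial extremal question as a statement about local-to-global behaviour of the model at every fugacity, bringing it into the regime where our method might apply.

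The first avenue I would pursue is a direct extension of the LP in Section~\ref{sec:matchingproofs}. I would enlarge the space of local observables beyond the single free neighborhood $C$ of a random oriented edge, for example to joint free neighborhoods at both endpoints or at two adjacent edges, with the goal of expressing $\Pr_\lam[|H|=k]$ (or at least the ratio at $k$) as a linear functional of a distribution over these observables. The new LP would inherit the constraint that the distribution comes from a $d$-regular graph, encoded via the same type of double-counting marginal identities $\gam^e_{i,j,k}(t)=\gam^f_{i,j,k}(t)$, possibly supplemented by higher-order identities capturing correlations needed to pin down $k$. The candidate primal optimizer remains the distribution induced by $K_{d,d}$, supported on $(i,j,k)$-triples with $i=j$ and $k=0$, and the dual would again be solved by a family of multipliers $\Lam_t$ to be guessed from the equality constraints at these triples.

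In parallel I would try to exploit the Heilmann--Lieb theorem, which gives $M_G(\lam) = \prod_{i=1}^{|E(G)|}(1+\mu_i\lam)$ with $\mu_i \ge 0$, so that $m_k(G) = e_k(\mu_1,\dots,\mu_{|E(G)|})$. For $H_{d,n}$ the root-multiset consists of the roots of $M_{K_{d,d}}$ taken with multiplicity $n/(2d)$. Conjecture~\ref{conj:match} then becomes a comparison between two elementary-symmetric ratios $e_k/e_{k-1}$ evaluated on two non-negative multisets, and we already know from Theorem~\ref{thm:matchingGeneral} that these multisets satisfy $\prod(1+\mu_i\lam) \le \prod(1+\nu_i\lam)$ for every $\lam>0$. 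The hope is to upgrade this pointwise polynomial domination to coefficient-ratio domination via a majorization or Schur-convexity argument for $e_k/e_{k-1}$, possibly combined with the Newton--Laguerre inequalities satisfied by real-rooted polynomials.

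The principal obstacle is precisely this upgrading: pointwise inequalities between two polynomials, even two real-rooted ones with non-negative roots, do not imply term-by-term inequalities between their coefficients, and Newton's inequalities constrain only the shape of a single coefficient sequence rather than the comparison between two such sequences. On the LP side, the difficulty is that any enrichment of the observable space that is capable of distinguishing $k$ from $k-1$ seems to require constraints whose number grows with $n$ or $k$, spoiling the closed-form dual analysis carried out in Claims~\ref{claim:Fexp} and~\ref{claim:Fdt}. I expect the most fruitful direction to be a \emph{graph operation} analogous to compressions or 2-switches that monotonically increases every ratio $m_k/m_{k-1}$ and whose fixed points are disjoint unions of $K_{d,d}$; such an operation would reduce the conjecture to a finite local check that dovetails with the dual analysis already developed. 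The companion Conjecture~\ref{conj:Ind} is expected to be strictly harder, since the independence polynomial need not be real-rooted and the Heilmann--Lieb route is unavailable, so any proof there will likely rest entirely on the LP/graph-operation side.
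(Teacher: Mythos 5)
You have not proved anything here, and to be clear neither does the paper: the statement you were given is Conjecture~\ref{conj:match}, which the paper explicitly leaves open (``these conjectures remain open''). Your text is a survey of candidate strategies, each of which you yourself correctly identify as stalling before completion, so there is no proof to certify. The one genuinely solid step is the reformulation via $\Pr_\lam[|H|=k]/\Pr_\lam[|H|=k-1]=\lam\, m_k(G)/m_{k-1}(G)$; this is correct, and it matches the paper's own closing remark that the conjecture amounts to saying the expected number of augmenting neighbors of a \emph{uniformly} random matching of size $k$ is extremized by $H_{d,n}$, whereas Theorem~\ref{thm:matchingGeneral} only controls the analogous quantity under the $\lam$-weighted (monomer-dimer) measure. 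That is exactly the gap: the LP of Section~\ref{sec:matchingproofs} constrains a single scalar observable (the edge occupancy fraction) of the grand-canonical measure, and no choice of $\lam$ isolates the fixed-size ensemble, which is why the paper can only extract the coefficient bound $m_k(G)\le 2\sqrt{n}\, m_k(H_{d,n})$ of Theorem~\ref{thm:givensize} rather than the ratio statement.

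Your two concrete avenues both fail for the reasons you name, and it is worth recording why they are not merely technical. For the Heilmann--Lieb route: Theorem~\ref{thm:matchingGeneral} integrated gives $M_G(\lam)\le M_{H_{d,n}}(\lam)$ for all $\lam>0$, but pointwise domination of two real-rooted polynomials with nonnegative coefficients implies essentially nothing about the comparison of their Newton ratios $e_k/e_{k-1}$; Newton's inequalities are intrinsic to one root multiset and do not transfer across the comparison. (Simple two-variable examples already show $P\le Q$ pointwise on $(0,\infty)$ with the coefficient ratios going the wrong way at some index.) For the LP route: any observable family rich enough to express $\Pr_\lam[|H|=k]$ is inherently global (it must count all $k$ dimers), so the ``local free neighborhood'' decomposition and the finite dual system of Claims~\ref{claim:Fexp} and~\ref{claim:Fdt} do not extend; the number of dual multipliers would have to grow with $n$. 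The graph-operation idea you end with is plausible as a research direction but is stated only as a hope, with no candidate operation and no monotonicity lemma. If you intend to submit this as a proof, it is not one; if you intend it as a discussion of approaches to an open problem, it is a reasonable and honest one.
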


Conjecture \ref{conj:Ind} also appeared in a draft of \cite{perkins2015birthday}.  These conjectures  are stronger than Theorems~\ref{thm:occupy} and \ref{thm:matchingGeneral} and imply the conjectures of \cite{kahn2001entropy} and \cite{friedland2008number}.  The relation to the work here is that Conjectures \ref{conj:Ind} and \ref{conj:match} can be stated as follows: the expected number of neighbors of uniformly random independent set (matching) of size $k$ is minimized by $H_{d,n}$.  Theorems \ref{thm:occupy} and \ref{thm:matchingGeneral} show that such a statement is true when the random independent set (matching) is chosen according to the hard-core model instead of uniformly  over those of a given size.
 
\section*{Acknowledgements}
We thank Prasad Tetali for his helpful comments, Emma Cohen for finding a mistake in an earlier draft of the paper, Laci V\'egh for explaining to us some aspects of duality in linear programming, and Ron Peled for an enjoyable and inspiring discussion of the hard-core model in London.  

\bibliography{IndSets}

\begin{thebibliography}{10}

\bibitem{alon1996independence}
N.~Alon.
\newblock Independence numbers of locally sparse graphs and a ramsey type
  problem.
\newblock {\em Random Structures and Algorithms}, 9(3):271--278, 1996.

\bibitem{bencs2014christoffel}
F.~Bencs.
\newblock Christoffel-{D}arboux type identities for independence polynomial.
\newblock {\em arXiv preprint arXiv:1409.2527}, 2014.

\bibitem{benjamini2011recurrence}
I.~Benjamini and O.~Schramm.
\newblock Recurrence of distributional limits of finite planar graphs.
\newblock In {\em Selected Works of Oded Schramm}, pages 533--545. Springer,
  2011.

\bibitem{bhatnagar2014decay}
N.~Bhatnagar, A.~Sly, P.~Tetali, et~al.
\newblock Decay of correlations for the hardcore model on the $ d $-regular
  random graph.
\newblock {\em Electronic Journal of Probability}, 21, 2016.

\bibitem{boyd2004convex}
S.~Boyd and L.~Vandenberghe.
\newblock {\em Convex optimization}.
\newblock Cambridge university press, 2004.

\bibitem{bregman1973some}
L.~Bregman.
\newblock Some properties of nonnegative matrices and their permanents.
\newblock In {\em Soviet Math. Dokl}, volume~14, pages 945--949, 1973.

\bibitem{carroll2009matchings}
T.~Carroll, D.~Galvin, and P.~Tetali.
\newblock Matchings and independent sets of a fixed size in regular graphs.
\newblock {\em Journal of Combinatorial Theory, Series A}, 116(7):1219--1227,
  2009.

\bibitem{csikvari2014lower}
P.~Csikv{\'a}ri.
\newblock Lower matching conjecture, and a new proof of {S}chrijver's and
  {G}urvits's theorems.
\newblock {\em arXiv preprint arXiv:1406.0766}, 2014.

\bibitem{csikvari2014matchings}
P.~Csikv{\'a}ri.
\newblock Matchings in vertex-transitive bipartite graphs.
\newblock {\em Israel Journal of Mathematics}, 215(1):99--134, 2016.

\bibitem{fill2001stochastic}
J.~A. Fill and M.~Machida.
\newblock Stochastic monotonicity and realizable monotonicity.
\newblock {\em Annals of probability}, pages 938--978, 2001.

\bibitem{friedland2008validations}
S.~Friedland, E.~Krop, P.~H. Lundow, and K.~Markstr{\"o}m.
\newblock On the validations of the asymptotic matching conjectures.
\newblock {\em Journal of Statistical Physics}, 133(3):513--533, 2008.

\bibitem{friedland2008number}
S.~Friedland, E.~Krop, and K.~Markstr{\"o}m.
\newblock On the number of matchings in regular graphs.
\newblock {\em The Electronic Journal of Combinatorics}, 15(1):R110, 2008.

\bibitem{galvin2014three}
D.~Galvin.
\newblock Three tutorial lectures on entropy and counting.
\newblock {\em arXiv preprint arXiv:1406.7872}, 2014.

\bibitem{galvin2004weighted}
D.~Galvin and P.~Tetali.
\newblock On weighted graph homomorphisms.
\newblock {\em DIMACS Series in Discrete Mathematics and Theoretical Computer
  Science}, 63:97--104, 2004.

\bibitem{heilmann1972theory}
O.~J. Heilmann and E.~H. Lieb.
\newblock Theory of monomer-dimer systems.
\newblock {\em Communications in Mathematical Physics}, 25(3):190--232, 1972.

\bibitem{ilinca2013asymptotics}
L.~Ilinca and J.~Kahn.
\newblock Asymptotics of the upper matching conjecture.
\newblock {\em Journal of Combinatorial Theory, Series A}, 120(5):976--983,
  2013.

\bibitem{kahn2000normal}
J.~Kahn.
\newblock A normal law for matchings.
\newblock {\em Combinatorica}, 20(3):339--391, 2000.

\bibitem{kahn2001entropy}
J.~Kahn.
\newblock An entropy approach to the hard-core model on bipartite graphs.
\newblock {\em Combinatorics, Probability and Computing}, 10(03):219--237,
  2001.

\bibitem{kahn2002entropy}
J.~Kahn.
\newblock Entropy, independent sets and antichains: a new approach to
  {D}edekind's problem.
\newblock {\em Proceedings of the American Mathematical Society},
  130(2):371--378, 2002.

\bibitem{kopparty2011homomorphism}
S.~Kopparty and B.~Rossman.
\newblock The homomorphism domination exponent.
\newblock {\em European Journal of Combinatorics}, 32(7):1097--1114, 2011.

\bibitem{lelarge2015counting}
M.~Lelarge.
\newblock Counting matchings in irregular bipartite graphs and random lifts.
\newblock In {\em Proceedings of the Twenty-Eighth Annual ACM-SIAM Symposium on
  Discrete Algorithms}, pages 2230--2237. SIAM, 2017.

\bibitem{lubetzky2014replica}
E.~Lubetzky and Y.~Zhao.
\newblock On replica symmetry of large deviations in random graphs.
\newblock {\em Random Structures \& Algorithms}, 2014.

\bibitem{perkins2015birthday}
W.~Perkins.
\newblock Birthday inequalities, repulsion, and hard spheres.
\newblock {\em Proceedings of the American Mathematical Society},
  144(6):2635--2649, 2016.

\bibitem{radhakrishnan20036}
J.~Radhakrishnan.
\newblock Entropy and counting.
\newblock {\em Computational Mathematics, Modelling and Algorithms}, page 146,
  2003.

\bibitem{ruozzi2012bethe}
N.~Ruozzi.
\newblock The {B}ethe partition function of log-supermodular graphical models.
\newblock In {\em Advances in Neural Information Processing Systems}, pages
  117--125, 2012.

\bibitem{schulman2015symbolic}
L.~J. Schulman, A.~Sinclair, and P.~Srivastava.
\newblock Symbolic integration and the complexity of computing averages.
\newblock In {\em Foundations of Computer Science (FOCS), 2015 IEEE 56th Annual
  Symposium on}, pages 1231--1245. IEEE, 2015.

\bibitem{shearer1995independence}
J.~B. Shearer.
\newblock On the independence number of sparse graphs.
\newblock {\em Random Structures \& Algorithms}, 7(3):269--271, 1995.

\bibitem{sinclair2014lee}
A.~Sinclair and P.~Srivastava.
\newblock Lee--{Y}ang theorems and the complexity of computing averages.
\newblock {\em Communications in Mathematical Physics}, 329(3):827--858, 2014.

\bibitem{van1994percolation}
J.~Van~den Berg and J.~Steif.
\newblock Percolation and the hard-core lattice gas model.
\newblock {\em Stochastic Processes and their Applications}, 49(2):179--197,
  1994.

\bibitem{weitz2006counting}
D.~Weitz.
\newblock Counting independent sets up to the tree threshold.
\newblock In {\em Proceedings of the thirty-eighth annual ACM symposium on
  Theory of computing}, pages 140--149. ACM, 2006.

\bibitem{zhao2010number}
Y.~Zhao.
\newblock The number of independent sets in a regular graph.
\newblock {\em Combinatorics, Probability and Computing}, 19(02):315--320,
  2010.

\bibitem{zhao2011bipartite}
Y.~Zhao.
\newblock The bipartite swapping trick on graph homomorphisms.
\newblock {\em SIAM Journal on Discrete Mathematics}, 25(2):660--680, 2011.

\end{thebibliography}
\bibliographystyle{abbrv}

\end{document}